\newtheorem{theorem}{Theorem}
\newtheorem{proposition}[theorem]{Proposition}
\newtheorem{lemma}[theorem]{Lemma}
\newtheorem{corollary}[theorem]{Corollary}
\theoremstyle{definition}
\newtheorem{definition}[theorem]{Definition}
\newtheorem{example}[theorem]{Example}
\newtheorem{remark}[theorem]{Remark}
\newtheorem{question}[theorem]{Question}
\newcommand{\defin}[1]{{\color{blue}\emph{#1}}}
\newcommand{\setN}{\mathbb{N}}
\newcommand{\setQ}{\mathbb{Q}}
\newcommand{\xvec}{\mathbf{x}}
\newcommand{\symS}{S}
\newcommand{\stBruhat}{{\mathrm{st}}}
\newcommand{\lex}{{\mathrm{lex}}}
\newcommand{\kfK}{K}
\newcommand{\revsort}[1]{\boldsymbol{\lambda}(#1)}
\newcommand{\diagram}[1]{{\mathcal{#1}}}
\newcommand{\atom}{\mathcal{A}}
\newcommand{\hlPolyP}{\mathrm{P}}
\newcommand{\key}{\mathcal{K}}
\newcommand{\macdonaldP}{\mathrm{P}}
\newcommand{\nonSymE}{\mathrm{E}}
\newcommand{\schurS}{\mathrm{s}}
  \newcommand{\tpi}{\tilde{\pi}}
  \newcommand{\ttheta}{\tilde{\theta}}
  \newcommand{\trho}{\tilde{\rho}}
\newcommand{\NAF}{\mathrm{NAF}}
\DeclareMathOperator{\length}{\ell}
\DeclareMathOperator{\id}{id}
\DeclareMathOperator{\leg}{leg}
\DeclareMathOperator{\arm}{arm}
\DeclareMathOperator{\inv}{inv}
\DeclareMathOperator{\maj}{maj}
\DeclareMathOperator{\coinv}{coinv}
\DeclareMathOperator{\twinv}{twinv}
\DeclareMathOperator{\Des}{\mathrm{Des}}
\title{Non-symmetric Macdonald polynomials and Demazure--Lusztig operators}
\author{Per Alexandersson}
\address{Dept. of Mathematics, University of Pennsylvania. Philadelphia, PA}
\email{per.w.alexandersson@gmail.com}
\keywords{Key polynomials, Demazure characters, standard bases, Macdonald polynomials, Demazure operators, Hall--Littlewood polynomials}
\subjclass[2010]{05E10,05A30,33D52}
\begin{document}

% \todo{Alcove-path model? Knop-Sahi formula?}
% 
% \todo{Let $\phi$ be the operator that sends $\nonSymE^{\omega_0}$ to $\nonSymE^{\id}$. How does this act on other basements?}

\begin{abstract}

We extend the family non-symmetric Macdonald polynomials and define \emph{permuted-basement Macdonald polynomials}.
We show that these also satisfy a triangularity property with respect
to the monomials bases and behave well under the Demazure--Lusztig operators.

The symmetric Macdonald polynomials $\macdonaldP_\lambda$ are expressed as
a sum of permuted-basement Macdonald polynomials via an explicit formula.

By letting $q=0$, we obtain $t$-deformations of key polynomials and Demazure atoms and 
we show that the Hall--Littlewood polynomials expand positively into these.
This generalizes a result by Haglund, Luoto, Mason and van Willigenburg.

As a corollary, we prove that Schur polynomials decompose with non-negative coefficients
into $t$-deformations of general Demazure atoms and thus generalizing the $t=0$ case which was previously known.
This gives a unified formula for the classical expansion of Schur polynomials in Hall--Littlewood polynomials
and the expansion of Schur polynomials into Demazure atoms.
\end{abstract}

\maketitle

% \linenumbers

\section{Introduction}

We study a generalization of non-symmetric Macdonald polynomials by adding a permutation parameter $\sigma$
to the combinatorial model for the classical non-symmetric Macdonald polynomials.
These are called \defin{permuted-basement Macdonald polynomials} and were previously introduced in \cite{Ferreira2011}.
The parameter $\sigma$ allows us to interpolate between two different parametrizations of
the Macdonald polynomials. This makes some unpublished results by J. Haglund and M. Haiman mentioned in
the introduction of \cite{Haglund2012,Remmel2013} explicit.

This extended family of polynomials satisfies many properties shared with
the classical non-symmetric Macdonald polynomials:

\begin{itemize}
 \item For each fixed value of $\sigma$, a triangularity property with respect to expansion in the monomial basis holds.
Consequently, the polynomials constitute a basis for $\setQ(q,t)[x_1,\dotsc,x_n]$ for each fixed $\sigma$.

 \item The permuted-basement Macdonald polynomials behave nicely under some affine Hecke algebra operators.
These operators are known as the Demazure--Lusztig operators,
which can be seen as a $t$-interpolation between the Demazure operators and the
operations that perform a simple transposition on indices of variables.

In particular, these operators act on the parameter $\sigma$ in a simple way,
see \cref{prop:basementPermutation}.
Consequently, there is a combinatorial definition based on fillings of diagrams,
as well as a recursive definition via such operators.

\item We give the expansion of the classical symmetric Macdonald polynomial, $\macdonaldP$, in the permuted-basement
Macdonald polynomials in \cref{thm:macdonaldPinGenMacE}. 

\item The specialization $q=0$ gives $t$-deformed Demazure atoms.
In particular, in \cref{cor:HLinGenAtoms2} we show that the Hall--Littlewood polynomials expands positively
in permuted-basement Macdonald polynomials when $q=0$, thus extending a result in \cite{Haglund2011463}.

\item The specialization $t=q=0$ of the permuted-basement Macdonald polynomials
give the Demazure characters (also known as key polynomials), and Demazure atoms.

\item The result in \cite[Prop. 6.1]{Mason2009} proves an equality between two combinatorial models for the key polynomials.
In \cref{prop:tKeyAsPBF}, we extend the result to incorporate the $t$ parameter
as well as showing then analogous statement for Demazure $t$-atoms,
\end{itemize}

Our goal with this paper is therefore to give a unified treatment of
non-symmetric Macdonald polynomials and specializations of these, such as Demazure atoms, key polynomials,
and operators acting on these.
The methods we use are based on the general theory
of non-attacking fillings described in \cite{HaglundNonSymmetricMacdonald2008}.

\section{Preliminaries -- Fillings and statistics}

Let $\sigma = (\sigma_1,\dots,\sigma_n)$ be a list of $n$ different positive integers and
let $\alpha=(\alpha_1,\dots,\alpha_n)$ be a weak integer composition, that is, a vector with non-negative integer entries.
An \defin{augmented filling} of shape $\alpha$ and \defin{basement} $\sigma$
is a filling of a Young diagram of shape $(\alpha_1,\dotsc,\alpha_n)$ with positive integers,
augmented with a zeroth column filled from top to bottom with $\sigma_1,\dotsc,\sigma_n$.

Note that we use \emph{English notation}, rather than the skyline fillings used in \cite{HaglundNonSymmetricMacdonald2008,Mason2009}.

\begin{definition}
Let $F$ be an augmented filling. Two boxes $a$, $b$, are \defin{attacking}
if $F(a)=F(b)$ and the boxes are either in the same column,
or they are in adjacent columns, with the rightmost box in a row strictly below the other box.
\begin{align*}
\begin{ytableau}
a  \\
\none[\scriptstyle\vdots] \\
b  \\
\end{ytableau}\quad
\text{or}\quad
\begin{ytableau}
 a & \none \\
 \none[\scriptstyle\vdots] \\
  & b \\
\end{ytableau}
\end{align*}
\end{definition}
A filling is \defin{non-attacking} if there are no attacking pairs of boxes.

\begin{definition}
A \defin{triple of type $A$} is an arrangement of boxes, $a$, $b$, $c$,
located such that $a$ is immediately to the left of $b$, and $c$ is somewhere below $b$,
and the row containing $a$ and $b$ is at least as long as the row containing $c$.

Similarly, a \defin{triple of type $B$} is an arrangement of boxes, $a$, $b$, $c$,
located such that $a$ is immediately to the left of $b$, and $c$ is somewhere above $a$,
and the row containing $a$ and $b$ is \emph{strictly} longer than the row containing $c$.

A type $A$ triple is an \defin{inversion triple} if the entries ordered increasingly,
form a \emph{counter-clockwise} orientation. Similarly, a type $B$ triple is an inversion triple
if the entries ordered increasingly form a \emph{clockwise} orientation.
If two entries are equal, the one with largest subscript in \cref{eq:invTriplets}
is considered largest.
\begin{equation}\label{eq:invTriplets}
\text{Type $A$:}\quad
\ytableausetup{centertableaux,boxsize=1.2em}
\begin{ytableau}
 a_3 & b_1 \\
 \none  & \none[\scriptstyle\vdots] \\
\none & c_2 \\
\end{ytableau}
\qquad
\text{Type $B$:}\quad
\ytableausetup{centertableaux,boxsize=1.2em}
\begin{ytableau}
c_2 & \none \\
\none[\scriptstyle\vdots]  & \none \\
a_3 & b_1 \\
\end{ytableau}
\end{equation}
\end{definition}

If $u = (i,j)$ let $d(u)$ denote $(i,j-1)$.
A \defin{descent} in $F$ is a non-basement box $u$ such that $F(d(u)) < F(u)$.
The set of descents inf $F$ is denoted $\Des(F)$.

\begin{example}
Below is a non-attacking filling of shape $(4,1,3,0,1)$ and with basement $(4,5,3,2,1)$.
The bold entries are descents, and the underlined entries form a type $A$ inversion triple.
There are $7$ inversion triples (of type $A$ and $B$) in total.
\[
\begin{ytableau}
\underline{4} & \underline{2} & 1 & \textbf{2} & 4\\
5 & 5\\
3 & 3 & \textbf{4} & 3\\
2\\
1 & \underline{1} \\
\end{ytableau}
\]
\end{example}

\medskip

The \defin{leg} of a box, denote $\leg(u)$, in an augmented diagram is the number of boxes to the right of $u$ in the diagram.
The \defin{arm}, denoted $\arm(u)$, of a box $u = (r,c)$ in an augmented diagram $\alpha$ is defined as the cardinality of
the set
\begin{align*}
\{ (r', c) \in \alpha : r < r' \text{ and } \alpha_{r'} \leq \alpha_r \} \cup \{ (r', c-1) \in \alpha : r' < r \text{ and } \alpha_{r'} < \alpha_r \}.
\end{align*}

We illustrate the boxes $x$ and $y$ (in the first and second set in the union, respectively) contributing to $\arm(u)$ below.
The boxes marked $l$ contribute to $\leg(u)$.
The $\arm$ values for all boxes in the diagram are shown in the diagram on the right.
% MatrixTeXForm[#] &@(YoungTableau@ SSAFArmFilling[1 + {4, 1, 0, 5, 3, 1, 4}] /. -1 -> " ")
% Sum[Boole[c <= shape[[i]] <= shape[[r]]], {i, r + 1, rows}] +  Sum[Boole[c - 1 <= shape[[i]] < shape[[r]]], {i, 1, r - 1}];
\begin{equation*}
 \begin{ytableau}
\;  & y &   &   &  \\
  & y\\
 \\
  &   & \mathbf u & l & l  & l \\
  &   & x &  \\
  &  \\
  &   & x &   &  \\
 \end{ytableau}
\qquad\qquad
 \begin{ytableau}
\;  & 4 & 2 & 2 & 1\\
  & 1\\
 \\
  & 6 & 4 & 3 & 2 & 1\\
  & 3 & 1 & 0\\
  & 1\\
  & 4 & 3 & 1 & 1\\
 \end{ytableau}
\end{equation*}
The \defin{major index}, $\maj(F)$, of an augmented filling $F$ is given by
\begin{align*}
\maj(F) = \sum_{ u \in \Des(F) } \leg(u)+1.
\end{align*}
The \defin{number of inversions}, $\inv(F)$ of a filling is the number of inversion triples of either type.
The number of \defin{coinversions}, $\coinv(F)$, is the number of type $A$ and type $B$ triples which are \emph{not}
inversion triples.

\medskip

Let $\NAF_\sigma(\alpha)$ denote all non-attacking fillings of shape $\alpha$, augmented with the basement $\sigma \in \symS_n$,
and all entries in the fillings are in $\{1,\dotsc,n\}$.
\begin{example}
The set $\NAF_{3124}(1,1,0,2)$ consists of the following augmented fillings:
\begin{align*}
\substack{\young(31,12,2,443)\\ \coinv: 1\\ \maj: 1} \quad
\substack{\young(31,12,2,444)\\ \coinv: 1\\ \maj: 1} \quad
\substack{\young(32,11,2,443)\\ \coinv: 0\\ \maj: 0} \quad
\substack{\young(32,11,2,444)\\ \coinv: 0\\ \maj: 0}\\
\substack{\young(33,11,2,442)\\ \coinv: 1\\ \maj: 0} \quad 
\substack{\young(33,11,2,444)\\ \coinv: 0\\ \maj: 0} \quad 
\substack{\young(33,12,2,441)\\ \coinv: 2\\ \maj: 1} \quad 
\substack{\young(33,12,2,444)\\ \coinv: 0\\ \maj: 1}
\end{align*}

% toBlah[f_]:=StringJoin["\\substack{",YoungTableauTeX[YoungTableau@f],"\\\\ \\coinv: ",
% ToString[SSAFCoInversions@f], "\\\\ \\maj: ",ToString[SSAFMajorIndex@f],"}"];
% {YoungTableauTeX[YoungTableau@#],SSAFCoInversions[#],SSAFMajorIndex[#]}&/@MacdonaldEFillings[{1,1,0,2},{3,1,2,4}]
\end{example}

\section{A generalization of non-symmetric Macdonald polynomials}

The length of a permutation, $\length(\sigma)$, is the number of inversions in $\sigma$.
We use the standard convention and let $\omega_0$ denote the unique longest permutation in $\symS_n$,
that is, $\omega_0 = (n,n-1,\dotsc,1)$ in one-line notation.
Permutations act on compositions by permuting the entries.
Throughout the paper, $\alpha$ and $\gamma$ denote compositions, while $\lambda$ and $\mu$ are partitions.

\medskip

\begin{definition}
Let $\sigma \in \symS_n$ and let $\alpha$ be a composition with $n$ parts.
The \defin{non-symmetric permuted basement Macdonald polynomial} $\nonSymE^\sigma_\alpha(\xvec;q,t)$ is defined as
\begin{equation}\label{eq:nonSymmetricMacdonaldBasement}
\nonSymE^\sigma_\alpha(\xvec; q,t) = \sum_{ F \in \NAF_\sigma(\alpha)} \xvec^F q^{\maj F} t^{\coinv F} \!\!\!
\prod_{ \substack{ u \in F \\ F(d(u))=F(u) }} \!\!\! \frac{1-t}{1-q^{1+\leg u} t^{1+\arm u}},
\end{equation}
where $F(d(u)) \neq F(u)$ in the product index if $u$ is a box in the basement.
\end{definition}
When $\sigma = \omega_0$, we recover\footnote{There is a slight difference in notation, the shape index here is reversed, compared to \cite{HaglundNonSymmetricMacdonald2008}.}
the non-symmetric Macdonald polynomials defined in \cite{HaglundNonSymmetricMacdonald2008}, $\nonSymE_\alpha(\xvec;q,t)$.
We refer to this particular value for $\sigma$ as the \emph{key} basement and
we simply write $\nonSymE_\alpha(\xvec; q,t)$ for $\nonSymE^{\omega_0}_\alpha(\xvec; q,t)$.

\subsection{Properties of non-symmetric Macdonald polynomials}

The following relation is a part of the Knop--Sahi recurrence relations for Macdonald polynomials \cite{Knop1997,Sahi1996}:
\begin{equation}\label{eq:knopRelation}
 \nonSymE_{\hat{\alpha}}(\xvec;q,t) = q^{\alpha_1} x_1 \nonSymE_{\alpha}(x_2,\dotsc,x_n,q^{-1}x_1;q,t)
\end{equation}
where $\hat{\alpha} = (\alpha_2,\dotsc,\alpha_n, \alpha_1 +1)$.
Also note that
\[
\nonSymE^\sigma_{(\alpha_1+1,\dotsc,\alpha_n+1)}(\xvec;q,t) = (x_1\dotsm x_n) \nonSymE^\sigma_{(\alpha_1,\dotsc,\alpha_n)}(\xvec;q,t),
\]
which allow us to extend the definition of non-symmetric Macdonald ``polynomials'' for compositions $\alpha$ with negative entries.

\begin{proposition}[Corollary 3.6.4 in \cite{HaglundNonSymmetricMacdonald2008}]\label{prop:specToAtom}
We have the relation
\begin{equation*}
\nonSymE^{\omega_0}_\alpha(x_1,\dotsc,x_n; q,t) =\nonSymE^{\id}_\alpha(x_n,\dotsc,x_1; q^{-1}, t^{-1}).
\end{equation*}
\end{proposition}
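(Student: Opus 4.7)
The plan is to give a bijective proof directly on the combinatorial sum. Define an involution $\phi \colon \NAF_{\omega_0}(\alpha) \to \NAF_{\id}(\alpha)$ by the entry-wise complement $\phi(F)(u) = n+1-F(u)$. This sends the basement $\omega_0=(n,n-1,\dotsc,1)$ to $\id=(1,2,\dotsc,n)$, and preserves the non-attacking condition because that condition depends only on whether two entries are equal.

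Next I would track how each statistic transforms under $\phi$. The weight satisfies $\xvec^F = (x_n,\dotsc,x_1)^{\phi(F)}$, which accounts for the variable reversal on the right-hand side. The set of type $A$ and type $B$ triples is geometric and so is preserved, but complementation reverses the total order on entries (including the subscript tiebreak in \cref{eq:invTriplets}), so clockwise and counterclockwise orientations swap; hence $\coinv(F) = \inv(\phi(F))$. Similarly, descents of $F$ become strict ``ascents'' of $\phi(F)$, and the pair-box set $\{u : F(d(u)) = F(u)\}$ is unchanged.

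To reconcile the $q\mapsto q^{-1}$, $t\mapsto t^{-1}$ substitution with these statistic changes, I would use the elementary identity
\[
\frac{1-t^{-1}}{1-q^{-(1+\leg u)}t^{-(1+\arm u)}} \;=\; q^{1+\leg u}\, t^{\arm u}\cdot \frac{1-t}{1-q^{1+\leg u}t^{1+\arm u}},
\]
so that each pair-box factor on the right-hand side contributes an extra $q^{1+\leg u}t^{\arm u}$. Combined with the shape-invariant identities that $\inv G + \coinv G$ equals the total number of triples in the shape $\alpha$, together with the decomposition of $\sum_u(1+\leg u)$ and $\sum_u \arm u$ over non-basement boxes into descent, ascent, and pair contributions, the powers of $q$ and $t$ assemble to match $q^{\maj F} t^{\coinv F}$ on the left-hand side term by term under $\phi$.

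The main obstacle is the careful bookkeeping of these shape-invariant constants to check that every exponent cancels. This calculation is precisely the content of Corollary 3.6.4 of \cite{HaglundNonSymmetricMacdonald2008}, which the present proposition invokes directly after translating between their skyline conventions and the English-notation augmented fillings used here.
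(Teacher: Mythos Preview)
The paper does not supply a proof of this proposition at all; it is quoted as Corollary~3.6.4 of \cite{HaglundNonSymmetricMacdonald2008} and used as a black box. Your sketch is essentially the bijective argument behind that corollary, and the strategy is sound.

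One point needs more care. Your assertion that complementation reverses the total order \emph{including the subscript tiebreak} is not correct: when two entries in a triple are equal (which in a non-attacking filling can only occur for the horizontally adjacent pair $a=b$), the tiebreak $b_1<a_3$ is unchanged by $i\mapsto n+1-i$, so such ``tie'' triples are inversion triples in \emph{both} $F$ and $\phi(F)$. Consequently your claimed identity $\coinv(F)=\inv(\phi(F))$ is off by exactly the number of tie triples. This does not wreck the argument, because the tie triples are precisely those whose $(a,b)$ pair satisfies $F(d(u))=F(u)$, and for each non-basement box $u$ the quantity $\arm(u)$ counts the triples having $(d(u),u)$ in the $(a,b)$ role. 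The identities that actually make the term-by-term comparison go through are
\[
\maj F + \maj \phi(F) \;=\!\! \sum_{\substack{u\\ F(d(u))\neq F(u)}} (1+\leg u),
\qquad
\coinv F + \coinv \phi(F) \;=\!\! \sum_{\substack{u\\ F(d(u))\neq F(u)}} \arm u,
\]
both of which follow from the ingredients you list once the tie triples are handled as above. (Note also that in the defining formula \eqref{eq:nonSymmetricMacdonaldBasement} the product should range over boxes with $F(d(u))\neq F(u)$, as in \cite{HaglundNonSymmetricMacdonald2008}; this is what makes the extra factors $q^{1+\leg u}t^{\arm u}$ land on the correct set of boxes.)
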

The polynomials appearing in the right hand side above is a version of non-symmetric Macdonald polynomials that are studied in \cite{Marshall1999}.

\begin{question}
Is there a relation similar to that in \cref{prop:specToAtom} for other basements?
\end{question}

Using \cref{eq:nonSymmetricMacdonaldBasement} and \cref{prop:specToAtom}, we obtain the following
diagram of specializations, where we recover the \defin{key polynomial} $\key_\alpha(\xvec)$ and \defin{Demazure atom}, $\atom_\alpha(\xvec)$.
These specializations were proved in \cite{Mason2009} --- we give the classical
definition (as described in \cite{Lascoux1990Keys,ReinerShimozono1995}) of key polynomials and Demazure atoms in \cref{sec:tAtoms}.
\[
\begin{CD}
\nonSymE^{\sigma}_\alpha(\xvec;q,t) @>{\sigma=\omega_0}>> \nonSymE^{\omega_0}_\alpha(\xvec;q,t) @>{t=q=0}>> \key_\alpha(\xvec)  @>{\alpha = \lambda}>>  \schurS_\lambda(\xvec)\\
@VV{\sigma=\id}V @VV{\substack{t=q=\infty \\ x_i \to x_{n+1-i}}}V \\
\nonSymE^{\id}_\alpha(\xvec;q,t) @>{t=q=0}>> \atom_\alpha(\xvec)
\end{CD}
\]
It is also easy to verify that semi-standard augmented fillings of partition shape $\lambda$ and basement $\omega_0$
can be put into bijection with semi-standard Young tableaux of shape $\lambda$. This shows that
key polynomial $\key_\lambda$ specialize to the Schur polynomial $\schurS_\lambda$ (in $n$ variables) whenever $\lambda$ is a partition.

\bigskip 
The classical non-symmetric Macdonald polynomials specialize to other well-known families of polynomials:
\[
\begin{CD}
\nonSymE_\alpha(\xvec;q,t) @>{\ast}>> \macdonaldP_\lambda(\xvec;q,t) \\
 @VV{ \substack{ \alpha = \lambda \\ q=1 \\ t=0  } }V  @VV{ \substack{ q=1 \\ t=0  } }V \\
 e_{\lambda'}(\xvec) & & m_\lambda(\xvec).
\end{CD}
\]
Here, $\ast$ is indicate the relation 
\[
\nonSymE_{(\lambda,0^n)}(x_1,\dotsc,x_n,0,\dotsc,0;q,t) = \macdonaldP_\lambda(x_1,\dotsc,x_n;q,t)
\]
for partitions $\lambda$ with $n$ parts. 
The polynomials $\nonSymE_\alpha(\xvec;1,0)$ can thus be interpreted 
as a non-symmetric analogue of the elementary symmetric functions $e_\lambda$.

% Another way to express this is 
% \[
%  \nonSymE^{n + \omega_0}_\alpha(\xvec;q,t) = \macdonaldP_\lambda(\xvec;q,t)
% \]
% where $n + \omega_0$ is the ``big'' basement $(2n,2n-1,\dotsc,n+1)$.

\bigskip 

Some specializations of non-symmetric Macdonald polynomials, such as $\nonSymE_\alpha(\xvec;q,0)$
and $\nonSymE_\alpha(\xvec;q^{-1},\infty)$ have representation-theoretical interpretations,
see \cite{FeiginMakedonskyi2015b,FeiginMakedonskyi2015}.
In particular, we note that \cite{FeiginMakedonskyi2015b} consider the combinatorial
model defined in \eqref{eq:nonSymmetricMacdonaldBasement} in an expansion of some $\nonSymE_\alpha(\xvec;q^{-1},\infty)$.

\subsection{Alcove walk model}

Another interesting article concerning non-symmetric Macdonald polynomials is \cite{RamYip2011},
which gives a combinatorial model using alcove walks. The basic idea is
to repeatedly use \cref{prop:thetaShapeTrans} below, expand the product
and interpret the terms. This method expresses a Macdonald polynomials as a sum over alcove walks,
starting at the \emph{fundamental alcove} and ending at the alcove representing
the particular Macdonald polynomial we are interested in.

It would be an interesting project to see if permuted-basement Macdonald polynomials can be generated in this way.
A natural conjecture is that the choice of a starting alcove --- which can be done in $n!$ ways ---
corresponds to the basement.

This interpretation would allow us to extend permuted-basement Macdonald
polynomials to other types. Note that the notion of key polynomials
is known in other types, defined via crystal operators, see for example \cite{HershLenart2015}.

\subsection{Triangularity}

In this subsection, we prove that the permuted-basement Macdonald polynomials
satisfy a triangularity property with respect to the monomial basis.

\begin{definition}
We define the \defin{Bruhat order} on compositions of $m$ with $n$ parts as the transitive closure of the following relations:
\begin{itemize}
\item If $i<j$ and $\alpha_j > \alpha_i$ then $\alpha >_\stBruhat s_{ij}(\alpha)$ where $s_{ij}$ is the transposition $(i,j)$.
\item If $i<j$ and  $\alpha_j- \alpha_i>1$ then $s_{ij}(\alpha) >_\stBruhat \alpha + e_i - e_j$.
\end{itemize}
\end{definition}

Just as for the standard non-symmetric Macdonald polynomials,
the $\nonSymE^\sigma_\alpha(\xvec; q,t)$ satisfy a triangularity condition with respect to the monomial basis:
\begin{equation}\label{eq:triangularity}
 \nonSymE^\sigma_\alpha(\xvec; q,t) \in \xvec^{\sigma^{-1}(\alpha)} + \setQ(q,t)\{ \xvec^{\sigma^{-1}(\gamma)} : \gamma <_\stBruhat \alpha \}.
\end{equation}
Alternatively, this can be expressed in a slightly more pleasant way as
% \begin{equation}\label{eq:triangularityAlt2}
%  \nonSymE^\sigma_{\sigma\alpha}(\xvec; q,t) \in \xvec^{\alpha} + \setQ(q,t)\{ \xvec^{\gamma} : \sigma\gamma <_\stBruhat \sigma\alpha \}.
% \end{equation}
\begin{equation}\label{eq:triangularityAlt}
 \nonSymE^\sigma_{\alpha}(\sigma\xvec; q,t) \in \xvec^{\alpha} + \setQ(q,t)\{ \xvec^{\gamma} : \gamma <_\stBruhat \alpha \}.
\end{equation}

We prove triangularity with respect to a lexicographic total ordering,
similar to what is done in \cite{Macdonald1995} for the classical symmetric Macdonald polynomials.
This ordering extends the Bruhat order defined above.
Here, $\revsort{\alpha}$ denotes the unique partition obtained from $\alpha$ by sorting the parts in a decreasing manner,
and $>_\lex$ is the standard lexicographic order, comparing elements componentwise from left to right.

\begin{proposition}[Triangularity]\label{prop:triangularity}
Let $\gamma$ and $\alpha$ be weak compositions of $m$ with $n$ parts.
Then for any basement $\sigma \in \symS_n$,
\[
 [\xvec^{\sigma^{-1}(\gamma)}] \nonSymE^\sigma_\alpha(\xvec; q,t) =
\begin{cases}
 0 \quad \text{ if } \revsort{\gamma} >_\lex \revsort{\alpha}, \\
 0 \quad \text{ if } \revsort{\gamma} = \revsort{\alpha} \text{ and } \gamma >_\lex \alpha,\\
 1 \quad \text{ if } \gamma=\alpha.
\end{cases}
\]
\end{proposition}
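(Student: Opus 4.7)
The plan is to work directly with the combinatorial formula \cref{eq:nonSymmetricMacdonaldBasement} and extract the coefficient of $\xvec^{\sigma^{-1}(\gamma)}$ as a sum of weights over those non-attacking fillings $F \in \NAF_\sigma(\alpha)$ whose content vector $c(F) = (c_1(F),\dots,c_n(F))$, with $c_i(F)$ counting the non-basement occurrences of $i$ in $F$, equals $\sigma^{-1}(\gamma)$. In broad strokes the argument follows Macdonald's proof of triangularity for the classical symmetric Macdonald polynomials \cite{Macdonald1995}, but the permuted basement forces an extra combinatorial step with no analogue in that setting.

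The first step is the partition-level bound. The non-attacking condition forces distinct entries in each column of the augmented diagram, so setting $\lambda = \revsort{\alpha}$ with conjugate partition $\lambda'$, column $c$ of the diagram contributes a $0/1$ indicator vector of weight $\lambda'_c$ to the content. A Gale--Ryser / dominance argument then yields $\revsort{c(F)} \leq \lambda$ in the dominance order, and in particular $\revsort{c(F)} \leq_\lex \revsort{\alpha}$, which handles the first case.

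The second and hardest step is the composition-level bound. The basement-to-column-$1$ adjacency condition forces $F(r,1) \in \{\sigma_r,\sigma_{r+1},\dots,\sigma_n\}$ for every row $r$ with $\alpha_r \geq 1$, and analogous constraints link column $c$ to column $c-1$. In the equality case $\revsort{c(F)} = \revsort{\alpha}$, equality in the Gale--Ryser bound rigidifies the column incidence structure, and combining this with the basement ordering one argues, working from left to right on the columns (or equivalently by induction on the Bruhat order on compositions), that if we let $\gamma$ be determined by $c(F) = \sigma^{-1}(\gamma)$, then $\gamma \leq_\lex \alpha$, with equality only for the all-basement filling $F_0$ given by $F_0(r,c) := \sigma_r$ for every non-basement cell.

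The final step is to read off the leading coefficient from $F_0$. One checks directly that $F_0$ is non-attacking, since in any column $c$ the entries form the set $\{\sigma_r : \alpha_r \geq c\}$, which is distinct because $\sigma$ is a permutation, and the adjacency tests reduce to the same observation; that its content equals $\sigma^{-1}(\alpha)$; that it has no descents (each row is constant) and no coinversion triples; and that the product factor in \cref{eq:nonSymmetricMacdonaldBasement} simplifies to $1$ on $F_0$. Together with the uniqueness of $F_0$ from the previous step, this pins the coefficient of $\xvec^{\sigma^{-1}(\alpha)}$ at exactly $1$. I expect the composition-level bound to be the main obstacle: the partition-level bound is essentially formal Gale--Ryser, and the leading-coefficient check is direct, but translating the sorted bound into a lex bound on the unsorted composition genuinely exploits how the basement $\sigma$ orders the admissible contents, and will require a careful column-by-column analysis of which permutations of $\alpha$ are realized as $c(F)$.
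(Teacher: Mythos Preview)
Your proposal is essentially correct and follows the same three-case structure as the paper's proof: the partition-level bound, the composition-level bound in the equal-sort case, and the unique leading filling $F_0$.

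For the first and third cases your arguments match the paper's almost exactly (your Gale--Ryser phrasing is just a slicker packaging of the paper's direct column-count, and the paper likewise checks that $F_0$ is the unique contributor and has weight~$1$). The genuine point of comparison is the second case. The paper's argument is a \emph{box-removal} induction: choose $i$ with $\gamma_i$ maximal, observe that the entries equal to $\sigma_i$ occupy one box per column with row index weakly decreasing left to right and bounded by $i$ (via the basement adjacency), hence the rightmost such box sits at the end of a longest row $\ell$ with $\ell \le i$; deleting that box preserves both $\revsort{\gamma'} = \revsort{\alpha'}$ and $\gamma' >_\lex \alpha'$, so one descends to a contradiction. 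Your sketch instead proposes a left-to-right column sweep (or an induction on Bruhat order). This rests on the same key constraint --- value $\sigma_i$ in any column lies in a row $\le i$ --- so it should go through, but you will still need to make explicit the step that converts ``for every column $c$ the map $\{r : \gamma_r \ge c\} \to \{r : \alpha_r \ge c\}$ is row-decreasing'' into the lex inequality $\gamma \le_\lex \alpha$; that translation is exactly the work the paper's box-removal induction does, and your proposal currently leaves it as ``one argues.''
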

\begin{proof}
First note that
$
[x^{\sigma^{-1}(\gamma)}]\nonSymE^\sigma_\alpha(\xvec;q,t) =
[x^{\gamma_1}_{\sigma_1}x^{\gamma_2}_{\sigma_2} \dotsm x^{\gamma_k}_{\sigma_k}]\nonSymE^\sigma_\alpha(\xvec;q,t)
$,
so we focus on non-attacking fillings of shape $\alpha$ and $\gamma_i$ entries with value $\sigma_i$ for $i=1,\dotsc,n$.

\noindent
\textbf{Case $\revsort{\gamma} >_\lex \revsort{\alpha}$:}
Let $\lambda = \revsort{\gamma}$ and $\mu = \revsort{\alpha}$.
The condition implies that there is some $j\geq 1$ such that
\[
\lambda_1 = \mu_1,\quad \lambda_2 = \mu_2, \quad  \dotsc \quad \lambda_{j-1} = \mu_{j-1} \; \text{ and } \; \lambda_j > \mu_j.
\]
Suppose there is a way to create a non-attacking filling with these properties.
Then there must be $\lambda_1$ equal entries in different columns,
then $\lambda_2$ equal entries in different columns and so on.

If $j=1$, it is evident that there is no such non-attacking filling,
since $\lambda_1$ entries must appear in different columns but there are only $\mu_1 (<\lambda_1)$ columns available.

In the case $j>1$, it is straightforward to show by induction
that after placing the first $\lambda_1 + \lambda_2 + \lambda_{j-1}$
entries, the number of columns with available empty boxes is $\mu_j$.
Since $\mu_j<\lambda_j$, there is no non-attacking filling
with weight $x^{\gamma_1}_{\sigma_1}x^{\gamma_2}_{\sigma_2} \dotsm x^{\gamma_n}_{\sigma_n}$,
shape $\gamma$ and basement $\sigma$ if $\revsort{\gamma} >_\lex \revsort{\alpha}$.

\noindent
\textbf{Case $\revsort{\gamma} = \revsort{\alpha}$ and $\gamma >_\lex \alpha$:}
Assume that there is a filling $T$ with shape $\alpha$ and
weight $x^{\gamma_1}_{\sigma_1}x^{\gamma_2}_{\sigma_2} \dotsm x^{\gamma_n}_{\sigma_n}$.
Let $\gamma_i$ be a largest entry in $\gamma$. This implies that
there is exactly one entry $\sigma_i$ in each column of $T$ and in particular, at the end of some longest row
with length $\alpha_l = \gamma_i$. The non-attacking condition for adjacent columns
now implies that if column $c$ has an entry equal to $\sigma_i$ in row $r_1$,
and column $c+1$ has an entry equal to $\sigma_i$ in row $r_2$,
then $r_1 \geq r_2$. Hence $T$ is of the form exemplified in \cref{eq:triangularityProofExample}
where $\ast$ marks entries with value $\sigma_i$.
\begin{align}\label{eq:triangularityProofExample}
 \ytableausetup{centertableaux,boxsize=1.2em}
\begin{ytableau}
\sigma_1  &  &  &   & \none[\alpha_1]\\
\sigma_2  &  &  &  &   \ast&  \ast &  \ast & \none[\alpha_2]\\
\sigma_3  &  &  \ast&  \ast &   & \none[\alpha_3]\\
\sigma_4  & \ast & \none[\alpha_4]   \\
\sigma_5  &  &   & \none[\alpha_5]\\
\end{ytableau},
\end{align}
It follows that $i \geq l$. By removing the last box in row $l$, we obtain a smaller filling $T'$,
with weight and shape given by
\[
\gamma' = (\gamma_1,\dotsc,\gamma_{i-1},\gamma_i-1,\gamma_{i+1},\dotsc,\gamma_n),
\qquad \alpha' = (\alpha_1,\dotsc,\alpha_{l-1},\alpha_l-1,\alpha_{i+1},\dotsc,\alpha_n).
\]
Finally, note that $\revsort{\gamma'} = \revsort{\alpha'}$ and $\gamma' >_\lex \alpha'$,
since $\gamma_i = \alpha_l$ and $i \geq l$.

However, this is absurd, since repeating this procedure eventually yields the empty filling,
where $\gamma >_\lex \alpha$ is no longer true.
Therefore, there cannot be a valid filling $T$ satisfying all conditions to begin with.

\noindent
\textbf{Case $\gamma=\alpha$:}
As in the previous case, we suppose that there is a filling, $T$, satisfying these conditions,
and we repeatedly remove a box from a longest row.
This operation preserves the property $\gamma \geq_\lex \alpha$,
but we know that as soon as a strict inequality is obtained, there is no such filling.

In order to have equality $\gamma = \alpha$ after each removal of a box,
we need that all $\sigma_i$ appear in the same row.
It follows that there is a unique filling, where every row $i$ is filled with boxes with value $\sigma_i$.
This filling has no inversions and no two different horizontally adjacent boxes,
so $T$ contributes with the monomial $x^{\gamma_1}_{\sigma_1}x^{\gamma_2}_{\sigma_2} \dotsm x^{\gamma_n}_{\sigma_n}$.
This proves the triangularity statement in \eqref{eq:triangularity}.
\end{proof}

\begin{question}
Is there a natural inner product (depending on $\sigma$) for which
the $\nonSymE^\sigma_\alpha(\xvec; q,t)$ form an orthogonal basis?
\end{question}

\section{Demazure--Lusztig operators}

In this section we introduce a set of operators acting on polynomials in $x_1,\dotsc,x_n$.
These appear in in the study of key polynomials and Demazure atoms,
see \emph{e.g.}~ the paper \cite{ReinerShimozono1995} by V. Reiner and M. Shimozono for a background on key polynomials,
as well as properties of these operators.

Let $s_i$ be a simple transposition on indices of variables and define
\[
 \partial_i = \frac{1-s_i}{x_i-x_{i+1}}, \quad \pi_i = \partial_i x_i, \quad \theta_i = \pi_i - 1.
\]
Note that $\partial_i (f)$ is indeed a polynomial if $f$ is, since $f-s_i f$ is divisible by $x_i - x_{i+1}$.
The operators $\pi_i$ and $\theta_i$ are used to define the key polynomials and Demazure atoms, respectively,
and we give this definition further down.
It should be mentioned that $\theta_i$ and $\pi_i$ are closely related to \emph{crystal operators} and $i$-strings,
see \cite{Mason2009} for details.
Now define the following $t$-deformations of the above operators:
\begin{align}
\tpi_i(f) = (1-t)\pi_i(f) + t s_i(f)  \qquad \ttheta_i(f) = (1-t)\theta_i(f) + t s_i(f).
\end{align}
The $\ttheta_i$ are called the \defin{Demazure--Lusztig operators} and are
generators for the affine Hecke algebra that appear in \cite{HaglundNonSymmetricMacdonald2008} (where $\ttheta_i$ is denoted $T_i$).
A similar set of operators appear in \cite[p.4]{Lascoux97ribbontableaux}, in the definition of Hall--Littlewood functions.

\medskip 

It should be mentioned that \cite{Ferreira2011} provides a nice characterization the permuted-basement Macdonald polynomials
as simultaneous eigenfunctions of certain products of such operators and the operation in \eqref{eq:knopRelation}.
This is a generalization of \emph{Cherednik's representation} \cite{Cherednik1995} of the affine Hecke algebra mentioned above.

\subsection{Some properties of $\ttheta_i$ and $\tpi_i$}

Using the definition above, it is straightforward to show that $\ttheta_i^2 = (t-1) \ttheta_i + t$,
which implies that $\tpi_i \ttheta_i (f) = \ttheta_i \tpi_i (f) = t f$.
Hence, $\ttheta_i$ and $\tpi_i$ are essentially inverses of each other.
We also have that $\tpi_i$ can be expressed in $\ttheta_i$ as
\begin{equation}\label{eq:tpi-in-ttheta}
\tpi_i(f) = \ttheta_i(f) + (1-t)(f).
\end{equation}

\medskip

As for the $s_i$, the $\ttheta_i$ and $\tpi_i$ satisfy the braid relations:
\begin{itemize}
 \item $\ttheta_i \ttheta_j = \ttheta_j \ttheta_i$ whenever $|i-j| \geq 2$ and
\item $\ttheta_i \ttheta_j \ttheta_i = \ttheta_j \ttheta_i \ttheta_j$ when $|i-j|=1$.
\end{itemize}
The same relations hold for the $\tpi_i$.
This implies that if $\omega = \omega_1 \dotsm \omega_\ell$ and $\omega' = \omega'_1 \dotsm \omega'_\ell$
are both reduced words for the same permutation in $\symS_n$, then
$\ttheta_{\omega_1} \ttheta_{\omega_2}  \dotsb \ttheta_{\omega_\ell} = \ttheta_{\omega'_1}\ttheta_{\omega'_2}  \dotsb \ttheta_{\omega'_\ell}$.
Hence, if $\tau \in \symS_n$ is a permutation, we can define
$\ttheta_\tau$ as $\ttheta_{\omega_1} \ttheta_{\omega_2}  \dotsb \ttheta_{\omega_\ell}$
where $\omega$ is any reduced word for $\tau$.
The braid relations above ensure that this is \emph{independent} of the choice of reduced word and thus well-defined.
We define $\tpi_\tau$ in a similar fashion.

The following lemma is a straightforward consequence of the definitions above:
\begin{lemma}\label{lem:operatorSymmetry}
If $f$ is symmetric in $x_ix_{i+1}$ then 
\begin{itemize}
 \item $\ttheta_i(f) = tf$,
 \item $\tpi_i(f) = f$,
 \item $\ttheta_i(f \cdot g) = f \cdot \ttheta_i(g)$ for any $g$,
 \item $\ttheta_j f(\xvec)$ is symmetric in  $x_ix_{i+1}$ for $j \notin \{i-1,i+1\}$.
\end{itemize}
\end{lemma}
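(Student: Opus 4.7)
The plan is to verify each of the four claims by direct computation from the definitions of $\pi_i$, $\theta_i$, $\tpi_i$, and $\ttheta_i$. The common starting observation is that the hypothesis $s_i f = f$ makes $f - s_i(f)$ vanish, which will let the denominator $x_i - x_{i+1}$ cancel in all expressions involving $\partial_i$.

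For claim (1), I would first compute $\pi_i(f) = \partial_i(x_i f)$. Expanding the definition of $\partial_i$, this equals
\[
\frac{x_i f - x_{i+1}\, s_i(x_i f)}{x_i - x_{i+1}} = \frac{x_i f - x_{i+1} f}{x_i - x_{i+1}} = f,
\]
using $s_i f = f$. Hence $\theta_i(f) = \pi_i(f) - f = 0$ and $\ttheta_i(f) = (1-t)\theta_i(f) + t s_i(f) = tf$. Claim (2) then follows instantly from \eqref{eq:tpi-in-ttheta}: $\tpi_i(f) = \ttheta_i(f) + (1-t)f = tf + (1-t)f = f$ (alternatively one can read it off directly from $\tpi_i(f) = (1-t)\pi_i(f) + t s_i(f)$).

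For claim (3), the key calculation is that when $s_i f = f$, the operator $\pi_i$ factors $f$ out of a product:
\[
\pi_i(fg) = \partial_i(x_i fg) = \frac{x_i fg - x_{i+1}\, f\, s_i(g)}{x_i - x_{i+1}} = f \cdot \pi_i(g).
\]
Combining this with $s_i(fg) = f\, s_i(g)$, one obtains $\ttheta_i(fg) = (1-t)f(\pi_i(g) - g) + t f s_i(g) = f \ttheta_i(g)$, as required.

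For claim (4), I split on whether $j = i$ or $|i - j| \geq 2$. If $j = i$, then $\ttheta_j(f) = tf$ by (1), which is still symmetric in $x_i, x_{i+1}$. If $|i - j| \geq 2$, then $s_j$ fixes $x_i, x_{i+1}$ and commutes with $s_i$, so $s_i(s_j f) = s_j(s_i f) = s_j f$; similarly, $s_i$ commutes with $\partial_j$ and fixes $x_j$, giving $s_i(\pi_j f) = \partial_j(x_j s_i f) = \partial_j(x_j f) = \pi_j f$. Hence every summand in the definition of $\ttheta_j(f)$ is symmetric in $x_i, x_{i+1}$. None of these steps presents a real obstacle; the only point requiring attention is in claim (3), namely the bookkeeping that $s_i(fg) = f\, s_i(g)$ and that the $x_i - x_{i+1}$ denominator genuinely cancels.
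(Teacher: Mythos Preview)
Your proof is correct and follows exactly the route the paper indicates (the paper gives no explicit argument, merely calling the lemma ``a straightforward consequence of the definitions above''). One small typo: in the first display for claim (1) the middle expression should read $\dfrac{x_i f - s_i(x_i f)}{x_i - x_{i+1}}$ (equivalently $\dfrac{x_i f - x_{i+1}\, s_i(f)}{x_i - x_{i+1}}$); the extra $x_{i+1}$ in front of $s_i(x_i f)$ is spurious, though the subsequent simplification and conclusion are unaffected.
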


The following lemma is an important tool in \cref{subsec:basementPerm}.
\begin{lemma}[\cite{HaglundNonSymmetricMacdonald2008}]\label{lem:tpisym}
We have $\ttheta_i( f ) = g$ \emph{if and only if} $f+g$ and $tx_{i+1} f + x_i g$ are both symmetric in $x_ix_j$.
\end{lemma}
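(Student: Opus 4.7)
The plan is to unwind the definition of $\ttheta_i$ into an explicit operator identity and then verify both implications by short direct computations. First I would simplify $\theta_i$ using the twisted Leibniz rule $\partial_i(x_i h) = h + x_{i+1}\partial_i(h)$, which applied to $\pi_i = \partial_i x_i$ gives $\pi_i(f) = f + x_{i+1}\partial_i(f)$ and hence $\theta_i(f) = x_{i+1}\partial_i(f)$. This yields the compact form
\begin{equation*}
 \ttheta_i(f) = (1-t)\, x_{i+1}\partial_i(f) + t\, s_i(f).
\end{equation*}

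For the forward direction, I would assume $g = \ttheta_i(f)$ and simply rearrange:
\begin{align*}
 f + g &= (1-t)\,\pi_i(f) + t(f + s_i f), \\
 t x_{i+1} f + x_i g &= (1-t)\, x_i x_{i+1}\, \partial_i(f) + t(x_{i+1} f + x_i\, s_i f).
\end{align*}
In each line both summands are visibly symmetric in $x_i, x_{i+1}$: the output of $\partial_i$ is always $s_i$-invariant, the factor $x_i x_{i+1}$ is $s_i$-invariant, and $f + s_i f$ and $x_{i+1} f + x_i s_i f$ are explicit symmetrizations under $s_i$. Hence $f+g$ and $t x_{i+1} f + x_i g$ are both symmetric in $x_i, x_{i+1}$.

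For the converse, I would assume both expressions are symmetric. Applying $s_i$ to each symmetry condition and subtracting the original produces the $2\times 2$ linear system
\begin{align*}
 s_i g - g &= f - s_i f, \\
 x_{i+1}\, s_i g - x_i g &= t x_{i+1} f - t x_i\, s_i f.
\end{align*}
Eliminating $s_i g$ by substituting the first equation into the second yields
\begin{equation*}
 (x_i - x_{i+1}) g = (1-t)\, x_{i+1} f + (t x_i - x_{i+1})\, s_i f,
\end{equation*}
which upon division by $x_i - x_{i+1}$ matches the explicit formula above for $\ttheta_i(f)$. The one step that warrants a tiny check is this final division, but it is immediate since the right-hand side vanishes at $x_i = x_{i+1}$ (where $s_i f = f$). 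Beyond that, the proof is essentially just unwinding definitions and checking symmetry term-by-term, so I do not expect any substantive obstacle.
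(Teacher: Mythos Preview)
Your proof is correct. The paper itself does not supply a proof of this lemma; it simply cites \cite{HaglundNonSymmetricMacdonald2008} and uses the result as a black box. Your argument---writing $\ttheta_i(f) = (1-t)\,x_{i+1}\partial_i(f) + t\,s_i(f)$, verifying the two symmetries termwise for the forward direction, and solving the $2\times 2$ linear system in $g$ and $s_i g$ for the converse---is clean and self-contained. One minor remark: in the last step you need not justify the division by $x_i - x_{i+1}$ via vanishing on the diagonal, since $g$ is already assumed to be a polynomial and the equation $(x_i - x_{i+1})g = \text{RHS}$ forces the quotient to be $g$ directly.
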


\begin{lemma}\label{lem:braidRelations}
The following \emph{mixed} braid relations holds for $\tpi_i$ and $\ttheta_i$:

\begin{align*}
\begin{cases}
\tpi_i \tpi_{i-1} \ttheta_i &= \ttheta_{i-1} \tpi_i \tpi_{i-1} \\
\tpi_{i-1} \tpi_i \ttheta_{i-1} &= \ttheta_{i} \tpi_{i-1} \tpi_{i}  
\end{cases}
\qquad \text{ and } \qquad
\begin{cases}
\tpi_{i-1} \ttheta_{i} \ttheta_{i-1} &= \ttheta_{i} \ttheta_{i-1} \tpi_i \\
\tpi_{i} \ttheta_{i-1} \ttheta_{i} &= \ttheta_{i-1} \ttheta_{i} \tpi_{i-1}.
\end{cases}
\end{align*}
\end{lemma}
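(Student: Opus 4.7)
The plan is to reduce each of the four mixed identities to the already-known braid relation for $\ttheta$'s (equivalently for $\tpi$'s) together with the key scalar identity $\tpi_i \ttheta_i = \ttheta_i \tpi_i = t$ noted earlier. The latter tells us that both $\ttheta_i$ and $\tpi_i$ are invertible operators, with $\ttheta_i^{-1} = \tpi_i/t$ and $\tpi_i^{-1} = \ttheta_i/t$. So to prove $A = B$ it suffices to show $A\,X = B\,X$ for some invertible $X$, and we will always pick $X$ to be whichever of $\tpi_i$ or $\ttheta_i$ converts a product appearing on both sides into a length-three braid.

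Concretely, for the first identity $\tpi_i \tpi_{i-1} \ttheta_i = \ttheta_{i-1} \tpi_i \tpi_{i-1}$, I would multiply both sides on the right by $\tpi_i$. The left becomes $\tpi_i \tpi_{i-1} (\ttheta_i \tpi_i) = t\, \tpi_i \tpi_{i-1}$, while the right becomes $\ttheta_{i-1}(\tpi_i \tpi_{i-1}\tpi_i)$, which by the $\tpi$-braid relation equals $\ttheta_{i-1}\tpi_{i-1}\tpi_i\tpi_{i-1} = t\, \tpi_i \tpi_{i-1}$. The two are equal, and invertibility of $\tpi_i$ cancels it. The second identity is proved in exactly the same way after multiplying by $\tpi_{i-1}$ on the right.

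For the third identity $\tpi_{i-1} \ttheta_i \ttheta_{i-1} = \ttheta_i \ttheta_{i-1} \tpi_i$, the symmetric trick is to multiply on the right by $\ttheta_i$. The left becomes $\tpi_{i-1}(\ttheta_i \ttheta_{i-1}\ttheta_i) = \tpi_{i-1}\ttheta_{i-1}\ttheta_i\ttheta_{i-1} = t\, \ttheta_i\ttheta_{i-1}$, using the $\ttheta$-braid relation; the right becomes $\ttheta_i\ttheta_{i-1}(\tpi_i \ttheta_i) = t\, \ttheta_i\ttheta_{i-1}$. Invertibility of $\ttheta_i$ cancels it. The fourth identity follows analogously after multiplying on the right by $\ttheta_{i-1}$.

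In my view there is no real obstacle here: the bookkeeping is entirely mechanical once one notices that the four statements are precisely the four ways to insert a single inverted factor into a length-three braid, and that $\tpi_i\ttheta_i = t$ is exactly the relation needed to absorb that inversion. The only point worth stating clearly in the write-up is the invertibility remark, since that is what legitimizes cancelling $\tpi_i$ or $\ttheta_i$ at the end of each of the four computations.
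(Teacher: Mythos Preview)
Your argument is correct, and it takes a genuinely different route from the paper. The paper simply substitutes $\tpi_j = \ttheta_j + (1-t)$ for $j \in \{i-1,i\}$ on both sides of each identity and expands, reducing everything to the braid relation $\ttheta_i\ttheta_{i-1}\ttheta_i = \ttheta_{i-1}\ttheta_i\ttheta_{i-1}$ and the quadratic relation $\ttheta_i^2 = (t-1)\ttheta_i + t$; this is a direct if somewhat tedious computation. Your approach instead exploits that $\tpi_i$ and $\ttheta_i$ are mutually inverse up to the unit $t \in \setQ(q,t)$: by post-multiplying each identity by the appropriate $\tpi_j$ or $\ttheta_j$, you collapse one side via $\ttheta_j\tpi_j = t$ (or $\tpi_j\ttheta_j = t$) and the other via the unmixed braid relation, and then cancel. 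This is cleaner and more conceptual, since it makes transparent that the four mixed relations are nothing more than the ordinary braid relation viewed through the lens of invertibility. The paper's method has the minor advantage of not needing $t$ to be a unit, so it would survive specialization to $t=0$ at the level of the proof; but since the ambient ring here is $\setQ(q,t)$, your invertibility step is fully justified.
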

\begin{proof}
Express $\tpi_i$ and $\tpi_{i-1}$ in terms of $\ttheta_i$ and $\ttheta_{i-1}$ respectively and expand.
\end{proof}

\subsection{Something about knots}

There is a deep connection between Macdonald polynomials and knot theory, 
see for example the connection between Jones polynomials and Macdonald polynomials \cite{Cherednik2012}.
It is not surprising, given the braid relations involved with $\ttheta_i$ and $\tpi_i$.
For a background on the braid group, see the introduction and definitions in \cite{Dehornoy2008}.
Intuitively, $\ttheta_i$ and $\tpi_i$ can be seen as $\hat{s}_i$ and $\hat{s}_i^{-1}$ in the Artin presentation of the braid group. 
The the relations in \ref{lem:braidRelations} are compatible with this interpretation,
the only caveat here is that $\ttheta_i \tpi_i = t$, while $\hat{s}_i \hat{s}_i^{-1} = \id$.
This has the consequence that if $\hat{s}_{i_1}^{\pm 1} \hat{s}_{i_2}^{\pm 1} \dotsm \hat{s}_{i_\ell}^{\pm 1}$
is a \emph{reduced word} in the braid group,
then substituting $\hat{s}_{i} \mapsto \ttheta_i$ and $\hat{s}^{-1}_{i} \mapsto \tpi_i$
gives a reduced word of operators. Furthermore, if $w_1$ and $w_2$ are \emph{reduced words} representing the same braid,
then the corresponding compositions of operators acts the same.

\subsection{Symmetries of diagram fillings}

In this subsection, we introduce the necessary notation to state an important
proposition proved in \cite{HaglundNonSymmetricMacdonald2008}.
The complete proof is fairly involved and closely related to the theory of LLT polynomials,
see the Appendix in \cite{qtCatalanBook}.

We generalize the notion of diagrams, arm values, leg values, major index and inversions.
A \defin{lattice-square diagram} $\diagram{D}$ is subset of boxes $(i,j) \in \setN^2$.
The \defin{reading order} of a lattice diagram is the total order given by reading 
squares column by column from right to left, and from top to bottom within each column,
as in \eqref{eq:readingOrderExample}.
\begin{equation}\label{eq:readingOrderExample}
 \ytableausetup{centertableaux,boxsize=1.2em}
\begin{ytableau}
\none & 12 & 8 & \none  & 4 \\
 13 & \none & 9 & 6 & \none  & \none  & 2  &  1\\
\none  & \none & 10 &  \none & 5  & 3 \\
 14 & \none & 11   \\
15 & \none & \none  & 7 \\
\end{ytableau}
\end{equation}
Arm and leg values for each box in a lattice-square diagram are arbitrary fixed non-negative integers.
We say that two boxes $u$, $v$ form an \defin{inversion pair} in a filling $F$ if they are attacking,
$v$ precedes $u$ in reading order and $F(u)<F(v)$.
Similarly, the \defin{descent set}, $\Des F$, of a filling is the set of boxes $u \in F$
such that $d(u) \in F$ and $F(d(u))<F(u)$.
Define $\maj$ and $\inv$ statistics for arbitrary lattice-square fillings as
\begin{align}
\inv F &= |\{(u,v) : u, v \text{ form an inversion pair in } F \}| - \sum_{s \in \Des F} \arm s \\
\maj F &= \sum_{s \in \Des F} (1 + \leg s).
\end{align} 
It is shown in \cite{HaglundNonSymmetricMacdonald2008} that these definitions extends the corresponding statistics on augmented fillings.

The following powerful proposition appears in \cite[Prop. 4.2.5]{HaglundNonSymmetricMacdonald2008}
which is later used to determine symmetries of expressions obtained as a sum over non-attacking fillings.

\begin{proposition}\label{prop:latticeDiagramSymmetry}
Consider two disjoint lattice diagrams, $\diagram{S}$ and $\diagram{B}$ and two disjoint subsets $\diagram{Y},\diagram{Z} \subseteq \diagram{S}$.
Let $\hat{\diagram{S}} = \diagram{B} \cup \diagram{S}$ and suppose we have a
fixed filling $B: \diagram{B} \to [n]$ which does not contain the entries $i$ and $i+1$.
Fix arm and leg values for all boxes $u$ such that $u \in \diagram{S}$ and $d(u) \in \hat{\diagram{S}}$.
For any filling $F : \diagram{S} \to [n]$, set $\hat{F} = B \cup F$. Then the sum
\begin{align*}
 \sum_{ \substack{ F : \diagram{S} \to [n] \\ \hat{F} \text{ non-attacking} \\ 
 F(\diagram{Y}) \cap \{i, i+1\} = \emptyset \\ F(\diagram{Z}) \subseteq \{i,i+1\} }}
x^F q^{\maj \hat{F}} t^{-\inv \hat{F}}
\prod_{\substack{u \in \diagram{S},\; d(u)\in \hat{\diagram{S}}\\ \hat{F}(u) = \hat{F}(d(u)) }} \left( 1- q^{1+\leg u} t^{1+\arm u} \right)
\prod_{\substack{u \in \diagram{S} \\ \hat{F}(u) \neq \hat{F}(d(u)) }} (1-t)
\end{align*}
where in the last factor, $\hat{F}(u) \neq \hat{F}(d(u)) $ is considered to be true if $d(u) \notin \hat{\diagram{S}}$,
is symmetric in $x_i x_{i+1}$.
\end{proposition}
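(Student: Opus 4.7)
The plan is to construct an explicit weight-preserving involution $\varphi$ on the set of fillings in the sum that interchanges the roles of $i$ and $i+1$, thereby realizing the $x_i \leftrightarrow x_{i+1}$ symmetry orbit by orbit. Since the fixed filling $B$ on $\diagram{B}$ contains no $i$ or $i+1$, the involution must act only on cells of $\diagram{S}$ carrying the value $i$ or $i+1$ (and necessarily lying outside $\diagram{Y}$), and the interaction of these cells with $B$ is completely rigid: no $B$-cell can participate in an attacking pair with an $i$ or $(i+1)$-cell of $F$, nor in the equal-neighbour product factor.

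First I would use the non-attacking condition to observe that each column of $\hat{\diagram{S}}$ contains at most one cell with value $i$ and at most one with value $i+1$. Call a column \emph{matched} when it contains both, and call an $i$- or $(i+1)$-cell \emph{free} when its column has no partner of the other value. Matched pairs must be fixed by $\varphi$: swapping their values would generically create an attacking pair in an adjacent column containing a free cell of the same value. On the free cells, read in reading order, one obtains a word in $\{i, i+1\}$. Following a Lascoux--Sch\"utzenberger style bracketing, one cancels adjacent $(i+1, i)$ subwords iteratively until only a monotone remainder $i^a (i+1)^b$ is left; the involution $\varphi$ acts by reversing this remainder to $i^b (i+1)^a$, while leaving matched pairs, cancelled subwords, and all cells with values outside $\{i,i+1\}$ untouched. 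Clearly $\varphi^2 = \id$, and $\varphi$ exchanges the total multiplicities of $i$ and $i+1$, so the $x$-weight transforms by $x_i \leftrightarrow x_{i+1}$.

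It then remains to verify that $\varphi$ preserves every remaining factor of the summand. The constraints from $\diagram{Y}$ and $\diagram{Z}$ are preserved since $\varphi$ only redistributes values within $\{i,i+1\}$ on cells already carrying such values. The major-index weight $q^{\maj \hat F}$ and the two products are preserved by a local check, because the equalities $\hat F(u)=\hat F(d(u))$ can only be created or destroyed when both $u$ and $d(u)$ carry values in $\{i,i+1\}$, and such configurations sit inside matched structures that $\varphi$ leaves alone. The cells of $\diagram{B}$ are entirely inert for these purposes, since their values lie outside $\{i,i+1\}$.

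The main obstacle, and where the bulk of the original argument in \cite{HaglundNonSymmetricMacdonald2008} is spent, is the invariance of $\inv \hat F$. The raw count of attacking pairs among $i$'s, and separately among $(i+1)$'s, does in general change under $\varphi$, so the correction term $-\sum_{s\in\Des\hat F}\arm s$ in the definition of $\inv$ is what must compensate. The key step is a local identity, verified in each pair of adjacent columns, pairing the change in attacking pairs against the change in $\sum \arm s$ over the descents contributed by $i$ and $i+1$; this is precisely the identity that motivated the correction term in the first place. Once this balance is checked case by case for the free/matched configurations of a pair of adjacent columns, summing contributions over the orbits of $\varphi$ yields the asserted symmetry in $x_i x_{i+1}$.
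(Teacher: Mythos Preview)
The paper does not actually prove this proposition: it is quoted verbatim as \cite[Prop.~4.2.5]{HaglundNonSymmetricMacdonald2008}, with the remark that ``the complete proof is fairly involved and closely related to the theory of LLT polynomials.'' So there is no in-paper argument to compare against; your sketch is an attempt to reconstruct the HHL proof itself.

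Your outline is in the right spirit---one does build an involution on the $\{i,i+1\}$-cells using a bracketing on the ``free'' ones---but the paragraph claiming preservation of $\maj$ and of the two product factors contains a genuine gap. You defined ``matched'' \emph{column-wise} (a column containing both $i$ and $i+1$), whereas $d(u)$ is the \emph{horizontal} left neighbour of $u$. Thus a pair $u,d(u)$ with $\hat F(u)=\hat F(d(u))=i$ need not lie in any matched column: both can be free, and your bracketing/reversal can flip exactly one of them. In that case the equality $\hat F(u)=\hat F(d(u))$ is destroyed, so the factor $\bigl(1-q^{1+\leg u}t^{1+\arm u}\bigr)$ is replaced by $(1-t)$, and the weight is not preserved. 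The sentence ``such configurations sit inside matched structures that $\varphi$ leaves alone'' is therefore false as stated.

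In the HHL argument this difficulty is not handled by a naive involution on $F$ alone. One first expands the two products as signed sums (equivalently, passes to ``super'' fillings that record which horizontal equalities are selected), so that each term is a pure $q^{\maj}t^{-\inv}$ monomial with no residual product; the resulting expression is an LLT-type sum, and it is at that level that the $x_i\leftrightarrow x_{i+1}$ involution (and the delicate $\inv$/$\arm$ cancellation you allude to) is carried out. Your sketch would be repaired by inserting this expansion step before constructing $\varphi$.
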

The fixed filling $B$ plays the rôle of a basement and 
the sets $\diagram{Z}$ and $\diagram{Y}$ are subsets of $\diagram{S}$
that specify which boxes should and should not contain $i$ and $i+1$.

\subsection{Permuting the basement}\label{subsec:basementPerm}

The following two properties in \cref{prop:basementPermutation} are essentially
inverses of each other --- we can use $\ttheta_i$ to decrease the length of the basement and $\tpi_i$ to increase the length.
The result in the following proposition appears without proof in \cite{Ferreira2011}, referencing a private communication
with J.~Haglund. We provide a full proof below.

\begin{proposition}[Basement permuting operators]\label{prop:basementPermutation}
Let $\alpha$ be a composition and let $\sigma$ be a permutation.
Furthermore, let $\gamma_i$ be the length of the row with basement label $i$, that is, $\gamma_i = \alpha_{\sigma^{-1}_i}$.

If $\length(\sigma s_i)  < \length(\sigma)$, then
\begin{align}\label{eq:atomOperatorOnBasement}
\ttheta_i \nonSymE_\alpha^{\sigma}(\xvec;q,t) = \nonSymE_\alpha^{\sigma s_i}(\xvec;q,t) \times
\begin{cases}
 t \text{ if } \gamma_i \leq \gamma_{i+1} \\
 1 \text{ otherwise.}
\end{cases}
\end{align}
Similarly, if $\length(\sigma s_i)  > \length(\sigma)$, then
\begin{align}\label{eq:keyOperatorOnBasement}
\tpi_i \nonSymE_\alpha^{\sigma}(\xvec;q,t) = \nonSymE_\alpha^{\sigma s_i}(\xvec;q,t) \times
\begin{cases}
 t \text{ if } \gamma_i < \gamma_{i+1} \\
 1 \text{ otherwise.}
\end{cases}
\end{align}
\end{proposition}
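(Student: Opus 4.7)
The plan is to invoke Lemma~\ref{lem:tpisym}, which turns the identity $\ttheta_i(f) = g$ into the pair of symmetry statements that $f+g$ and $tx_{i+1}f + x_i g$ are both symmetric in $x_ix_{i+1}$. Setting $f = \nonSymE^\sigma_\alpha$ and $g = c\cdot\nonSymE^{\sigma s_i}_\alpha$ for the scalar $c \in \{1,t\}$ predicted by the proposition reduces~\eqref{eq:atomOperatorOnBasement} to verifying these two $x_i x_{i+1}$-symmetries. For the $\tpi_i$ identity~\eqref{eq:keyOperatorOnBasement} I would run the same strategy in parallel, using the analogous characterization of $\tpi_i$ that follows from the relation $\tpi_i\ttheta_i = t$ together with~\eqref{eq:tpi-in-ttheta}; in both cases the combinatorial input is the same.

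To verify the two symmetries, I would expand both $\nonSymE^\sigma_\alpha$ and $\nonSymE^{\sigma s_i}_\alpha$ via~\eqref{eq:nonSymmetricMacdonaldBasement} and partition the combined sum by a \emph{background}: a choice of the positions and values of every filling entry outside $\{i,i+1\}$, together with every basement cell outside rows $i$ and $i+1$. The basements $\sigma$ and $\sigma s_i$ agree on every cell except the two basement cells at rows $i$ and $i+1$, so both polynomials split as a common background weight times a sum over non-attacking fillings of a remaining lattice-square region $\diagram{S}$ using only labels in $\{i,i+1\}$. The basement choice enters only by imposing, on the two cells of $\diagram{S}$ sitting in column zero at rows $i$ and $i+1$, one of the two assignments $(\sigma(i),\sigma(i+1))$ or $(\sigma(i+1),\sigma(i))$, a constraint captured by suitable subsets $\diagram{Y}$ and $\diagram{Z}$ of Proposition~\ref{prop:latticeDiagramSymmetry}. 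That proposition then delivers the required $x_i x_{i+1}$-symmetry of each background's contribution, and hence of the full sum.

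The scalar $c$ comes from a local analysis at the two swapped basement cells. Interchanging the basement values $\sigma(i) \leftrightarrow \sigma(i+1)$ modifies only a controlled set of type-$A$ and type-$B$ triples incident to those cells, affects whether the boxes $(i,1)$ and $(i+1,1)$ belong to $\Des$, and can change the agreement factors $\tfrac{1-t}{1-q^{1+\leg u}t^{1+\arm u}}$ at those two boxes. A direct case analysis on whether $\gamma_i$ is less than, equal to, or greater than $\gamma_{i+1}$ shows that these net changes collapse to exactly the factor $t$ or $1$ in the two branches of~\eqref{eq:atomOperatorOnBasement}; the distinction between the strict comparison in~\eqref{eq:keyOperatorOnBasement} and the weak comparison in~\eqref{eq:atomOperatorOnBasement} reflects that the equality case $\gamma_i = \gamma_{i+1}$ is absorbed by $\ttheta_i$ rather than by $\tpi_i$. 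The main obstacle is exactly this bookkeeping step: correctly tallying all the new and vanishing triples, descents, and agreement factors produced by the basement swap, and verifying that they multiply out to the single prescribed scalar. The genuine symmetry content of the argument is entirely packaged into Proposition~\ref{prop:latticeDiagramSymmetry}.
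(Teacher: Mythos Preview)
Your overall approach is the paper's: reduce \eqref{eq:atomOperatorOnBasement} via \cref{lem:tpisym} to a pair of $x_ix_{i+1}$-symmetries, then establish each symmetry with \cref{prop:latticeDiagramSymmetry}, treating the two basement cells carrying $i$ and $i{+}1$ as part of $\diagram{S}$ so that both orderings are summed simultaneously. That is correct in outline.

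Two execution points diverge from the paper and are worth flagging. First, your account of how the scalar $c$ arises is off. It is \emph{not} obtained by tallying how descents and agreement factors change under swapping the basement: those contributions are already absorbed by \cref{prop:latticeDiagramSymmetry} and do not distinguish the two orderings. What does distinguish them is a single inversion triple. The paper makes this transparent by adjoining an auxiliary box (carrying a large value ``$\infty$'') so that exactly one of the two basement orderings gains one extra inversion; since the weight in \cref{prop:latticeDiagramSymmetry} carries $t^{-\inv}$, this produces precisely the relative factor $t$ in the case $\gamma_i \le \gamma_{i+1}$ and no factor otherwise. Also, the sets $\diagram{Y},\diagram{Z}$ cannot pick out a \emph{specific} basement ordering (they only force membership or non-membership in $\{i,i{+}1\}$), so your remark that the basement choice is ``captured by suitable $\diagram{Y},\diagram{Z}$'' is not quite right; the sum necessarily contains both orderings, and the scalar is the ratio of their contributions.

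Second, you do not say how to produce the prefactors $tx_{i+1}$ and $x_i$ in the second symmetry $tx_{i+1}f + x_ig$. The paper handles this by adjoining an extra box in column $-1$, placed in $\diagram{Z}$, whose forced value $i$ or $i{+}1$ supplies the monomial $x_i$ or $x_{i+1}$; the factor $t$ again comes from a single extra inversion involving that box. Without this device the second symmetry does not fall out of \cref{prop:latticeDiagramSymmetry} directly. Finally, the paper obtains \eqref{eq:keyOperatorOnBasement} in one line by applying $\tpi_i$ to both sides of \eqref{eq:atomOperatorOnBasement} and using $\tpi_i\ttheta_i = t$; running the $\tpi_i$ argument ``in parallel'' is unnecessary. (Minor: the two relevant basement cells sit in rows $\sigma^{-1}(i)$ and $\sigma^{-1}(i{+}1)$, not rows $i$ and $i{+}1$.)
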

\begin{proof}
Using \cref{lem:tpisym}, to prove \eqref{eq:atomOperatorOnBasement}, it is enough to show two symmetries in $x_ix_{i+1}$.
There are two cases to consider:

\noindent
\textbf{Case $\gamma_i \leq \gamma_{i+1}$:}
It suffices to show that
\begin{equation}\label{eq:symmetriesCase1}
\nonSymE_\alpha^{\sigma}(\xvec;q,t) + t \cdot \nonSymE_\alpha^{\sigma s_i}(\xvec;q,t) \quad \text{ and } \quad
tx_{i+1} \cdot \nonSymE_\alpha^{\sigma}(\xvec;q,t) + t x_i \cdot \nonSymE_\alpha^{\sigma s_i}(\xvec;q,t)
\end{equation}
are symmetric in $x_i x_{i+1}$. We show these symmetries using \cref{prop:latticeDiagramSymmetry}.

For the first symmetry, let $\hat{\diagram{S}}$ be the augmented diagram with shape $\alpha$
and let $\diagram{B}$ be all boxes in the basement $\sigma$ not containing $\{i, i+1\}$.
Now consider non-attacking fillings $\hat{\sigma}: \hat{\diagram{S}} \to [n]$:
each such filling has $i$ and $i+1$ appearing in the basement column exactly once.
Thus, every such filling $\hat{\sigma}$ corresponds to either a filling
for $\nonSymE_\alpha^{\sigma}$ or $\nonSymE_\alpha^{\sigma s_i}$.
However, there is an extra inversion of type $A$ in the leftmost diagram in \eqref{eq:case11}, compared to the diagram on the right,
given by the boxes marked $\{i, i+1, \infty \}$. 
\begin{equation}\label{eq:case11}
\ytableausetup{centertableaux,boxsize=1.6em}
\begin{ytableau}
\infty & \scriptstyle{\mathbf{i+1}}  &  & \cdots & \\
\none & \none[\vdots]   \\
\none & \mathbf{i}  &  &  \\
\end{ytableau}
\quad
\text{ and }
\quad
\begin{ytableau}
\infty &\mathbf{i}  &  & \cdots & \\
\none&\none[\vdots]   \\
\none&\scriptstyle{\mathbf{i+1}}  &  &  \\
\end{ytableau}
\end{equation}
It follows that the sum over non-attacking fillings with basement $\sigma$ in \eqref{eq:case11}
is, up to a constant, $t^{-1} \nonSymE_\alpha^{\sigma}$,
and the sum over fillings with basement $\sigma s_i$ is, up to \emph{the same} constant, $\nonSymE_\alpha^{\sigma s_i}$.
\cref{prop:latticeDiagramSymmetry} states that the total sum $t^{-1} \nonSymE_\alpha^{\sigma} + \nonSymE_\alpha^{\sigma s_i}$
is symmetric in $x_i x_{i+1}$ which implies the first symmetry in \eqref{eq:symmetriesCase1}.

To show the second symmetry, let $\hat{\diagram{S}}$ be the augmented diagram of shape $\alpha$,
with an additional box $u$ in row $\sigma^{-1}_{i+1}$ and column $-1$.
The set $\diagram{B}$ is again all boxes in the basement $\sigma$ except the boxes containing $\{i,i+1\}$.
Let $\diagram{Z} = \{u\}$ in \cref{prop:latticeDiagramSymmetry}, such that the box $u$ may only contain $\{i,i+1\}$.
The non-attacking condition then forces the fillings of $\hat{\diagram{S}}$ to be of the forms in \eqref{eq:case12} ---
ignoring $u$, these fillings produce $\nonSymE_\alpha^{\sigma}$ and $\nonSymE_\alpha^{\sigma s_i}$.
\begin{equation}\label{eq:case12}
\ytableausetup{centertableaux,boxsize=1.6em}
\begin{ytableau}
\scriptstyle{\mathbf{i+1}} & \scriptstyle{\mathbf{i+1}}  &  & \cdots & \\
\none & \none[\vdots]   \\
\none & \mathbf{i}  &  &  \\
\end{ytableau}
\quad
\text{ and }
\quad
\begin{ytableau}
\mathbf{i} &\mathbf{i}  &  & \cdots & \\
\none&\none[\vdots]   \\
\none&\scriptstyle{\mathbf{i+1}}  &  &  \\
\end{ytableau}
\end{equation}
There are no extra inversions in this case, so \cref{prop:latticeDiagramSymmetry}
implies that $x_{i+1} \nonSymE_\alpha^{\sigma} +  x_i \nonSymE_\alpha^{\sigma s_i}$
is symmetric in $x_i x_{i+1}$, implying the second requirement in \eqref{eq:symmetriesCase1}.

\medskip 

\noindent
\textbf{Case $\gamma_i > \gamma_{i+1}$:} Using the exact same strategy as in previous case, we
need to show that
\begin{equation}\label{eq:symmetriesCase2}
\nonSymE_\alpha^{\sigma}(\xvec;q,t) + \nonSymE_\alpha^{\sigma s_i}(\xvec;q,t) \quad \text{ and } \quad
tx_{i+1} \cdot \nonSymE_\alpha^{\sigma}(\xvec;q,t) + x_i \cdot \nonSymE_\alpha^{\sigma s_i}(\xvec;q,t)
\end{equation}
are symmetric in $x_i x_{i+1}$.

As before, we fix the basement entries which are not in $\{i,i+1\}$ and consider fillings
of the two types in \eqref{eq:case21}.
\begin{equation}\label{eq:case21}
\ytableausetup{centertableaux,boxsize=1.6em}
\begin{ytableau}
\infty_2 & \scriptstyle{\mathbf{i+1}}  &  & \\
\none & \none[\vdots]   \\
\infty_1  & \mathbf{i}  &  & \cdots & \\
\end{ytableau}
\quad
\text{ and }
\quad
\begin{ytableau}
\infty_2 &\mathbf{i}  &  & \\
\none & \none[\vdots]   \\
\infty_1 & \scriptstyle{\mathbf{i+1}}  &  & \cdots & \\
\end{ytableau}
\end{equation}
In this case, there is no extra inversion in either of these
(we can imagine that the boxes marked $\infty_1$ and $\infty_2$ are greater than all other boxes, and $\infty_2 > \infty_1$)
so the first symmetry in \eqref{eq:symmetriesCase2} is straightforward.

Finally, as in the previous case, we add an extra box $u \in \diagram{Z}$, filled with either $i$ or $i+1$.
The fillings in \eqref{eq:case22} then give $\nonSymE_\alpha^{\sigma}(\xvec;q,t)$ and $ \nonSymE_\alpha^{\sigma s_i}(\xvec;q,t)$.
\begin{equation}\label{eq:case22}
\ytableausetup{centertableaux,boxsize=1.6em}
\begin{ytableau}
\scriptstyle{\mathbf{i+1}} & \scriptstyle{\mathbf{i+1}}  &  & \\
\none & \none[\vdots]   \\
\infty & \mathbf{i}  &  & \cdots & \\
\end{ytableau}
\quad
\text{ and }
\quad
\begin{ytableau}
\mathbf{i} & \mathbf{i}  &  & \\
\none & \none[\vdots]   \\
\infty & \scriptstyle{\mathbf{i+1}}  &  & \cdots & \\
\end{ytableau}
\end{equation}
We note that the second type has an extra inversion of type $B$, given by entries $\{i,i+1,\infty\}$
with $i$ and $\infty$ in the leftmost column. Hence,
$x_{i+1} \cdot \nonSymE_\alpha^{\sigma}(\xvec;q,t) + t^{-1}x_i \cdot \nonSymE_\alpha^{\sigma s_i}(\xvec;q,t)$
is symmetric in $x_i x_{i+1}$ which implies the last symmetry needed.

\medskip

Relation \eqref{eq:keyOperatorOnBasement} now follows from the first
by applying $\tpi_i$ on both sides of \cref{eq:atomOperatorOnBasement}
and use the fact that $\tpi_i \ttheta_i(f) = t f$ for all $f$.
\end{proof}

Repeated application of these operators gives us the following corollary:
\begin{corollary}\label{cor:twinv}
Let $\sigma$ and $\alpha$ be given and define $\twinv_\theta(\alpha, \sigma)$ and $\twinv_\pi(\alpha, \sigma)$ as
\begin{align*}
% \twinv_\theta(\alpha, \sigma) &= \left| \{ (i,j) : i  < j \text{ and } \alpha_i \leq \alpha_j \} \cap  \{ (i,j) : i<j \text{ and } \sigma_i < \sigma_j \}   \right| \\
% \twinv_\pi(\alpha, \sigma) &= \left| \{ (i,j) : i<j \text{ and } \alpha_i < \alpha_j \} \cap  \{ (i,j) : i<j \text{ and } \sigma_i < \sigma_j \}   \right|
\twinv_\theta(\alpha, \sigma) &= \left| \{ (i,j) : i  < j ,\; \alpha_i \leq \alpha_j \text{ and } \sigma_i < \sigma_j \}   \right| \\
\twinv_\pi(\alpha, \sigma) &= \left| \{ (i,j) : i<j ,\; \alpha_i < \alpha_j  \text{ and } \sigma_i < \sigma_j \}   \right|
\end{align*}
Then
\begin{align}
\ttheta_\sigma \nonSymE^{\omega_0}_\alpha(\xvec;q,t) &= t^{ \twinv_\theta(\omega_0 \alpha, \sigma) }  \nonSymE^{\omega_0\sigma}_\alpha(\xvec;q,t) \\
\tpi_\sigma \nonSymE^{\id}_\alpha(\xvec;q,t)     &= t^{ \twinv_\pi(\alpha, \sigma) } \nonSymE^\sigma_\alpha(\xvec;q,t).
\end{align}
\end{corollary}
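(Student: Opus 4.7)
The plan is to derive both identities by induction on $\length(\sigma)$, with \cref{prop:basementPermutation} serving as the engine at each step. The base case $\sigma = \id$ is immediate: both $\twinv_\theta(\omega_0\alpha,\id)$ and $\twinv_\pi(\alpha,\id)$ vanish, $\ttheta_{\id}$ and $\tpi_{\id}$ are the identity operator, and the basements $\omega_0\cdot\id = \omega_0$ and $\id\cdot\id = \id$ agree on both sides.

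For the inductive step of the first identity, I fix a reduced word $\sigma = s_{j_1}s_{j_2}\cdots s_{j_\ell}$, so that $\ttheta_\sigma = \ttheta_{j_1}\ttheta_{j_2}\cdots\ttheta_{j_\ell}$, and apply the operators to $\nonSymE^{\omega_0}_\alpha$ from the innermost factor outward. Since $\omega_0$ is the longest permutation, at every stage the basement has been produced from $\omega_0$ via a length-decreasing chain of simple reflections, so the hypothesis $\length(\sigma' s_{j_k}) < \length(\sigma')$ required by \cref{prop:basementPermutation} is satisfied throughout. Each application yields a prefactor of either $t$ or $1$, depending on the relation between the current row lengths $\gamma_{j_k}$ and $\gamma_{j_k+1}$ in basement-label order, and advances the basement by one simple reflection. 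After all $\ell$ applications the basement reaches $\omega_0\sigma$, and the accumulated prefactor is $t^k$ where $k$ counts the stages at which $\gamma_{j_k}\leq\gamma_{j_k+1}$ held.

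The hard part of the argument is identifying $k$ with $\twinv_\theta(\omega_0\alpha,\sigma)$. The initial row-length vector is $\gamma = \omega_0\alpha$, and each simple reflection on the basement correspondingly transposes two adjacent entries of $\gamma$. Standard facts about reduced words in $\symS_n$ put the factors of $\ttheta_\sigma$ in bijection with the pairs $(p,q)$ with $p<q$ and $\sigma_p<\sigma_q$. One then checks that the $t$-contribution condition ``$\gamma_{j_k}\leq\gamma_{j_k+1}$ at stage $k$'' translates, via the step-by-step tracking of $\gamma$, precisely into the condition ``$(\omega_0\alpha)_p\leq(\omega_0\alpha)_q$'' on the associated pair. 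Summing over the pairs yields exactly $\twinv_\theta(\omega_0\alpha,\sigma)$; this bijective bookkeeping is the main technical burden of the proof.

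The second identity is obtained by an entirely parallel induction, starting instead from the basement $\id$ and using the second case of \cref{prop:basementPermutation}. Successive multiplications by simple reflections now form a length-\emph{increasing} reduced chain, so the hypothesis $\length(\sigma' s_{j_k}) > \length(\sigma')$ for $\tpi_{j_k}$ is always met; the strict inequality $\gamma_{j_k}<\gamma_{j_k+1}$ now governs the $t$-contribution, which accounts exactly for the switch from $\twinv_\theta$ (weak inequality) to $\twinv_\pi$ (strict inequality).
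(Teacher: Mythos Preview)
Your proposal takes precisely the approach the paper takes: induction on $\length(\sigma)$, with \cref{prop:basementPermutation} furnishing each inductive step, and the length hypotheses guaranteed because one starts from the extreme basements $\omega_0$ (for the $\ttheta$ identity) and $\id$ (for the $\tpi$ identity). The paper's own proof is a single sentence to this effect, so you have simply fleshed out details it leaves implicit.

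One caution about the bookkeeping in your third paragraph: the standard bijection pairs the $\length(\sigma)$ letters of a reduced word for $\sigma$ with the \emph{inversions} of $\sigma$, namely pairs $(p,q)$ with $p<q$ and $\sigma_p>\sigma_q$. You wrote $\sigma_p<\sigma_q$, which indexes the complementary set of cardinality $\binom{n}{2}-\length(\sigma)$, so cannot be in bijection with the $\length(\sigma)$ factors. Since the identification of the exponent depends on tracking exactly which pair of row-lengths each reduced-word letter swaps, you will want to sort out the inequality directions carefully when writing up the details; but the overall inductive scheme is correct and matches the paper.
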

\begin{proof}
The proof is more or less immediate via induction over $\length(\sigma)$
by unraveling \eqref{eq:atomOperatorOnBasement} and \eqref{eq:keyOperatorOnBasement}.
\end{proof}

\subsection{Permuting the shape}

We now prove a more general version of an identity in \cite{HaglundNonSymmetricMacdonald2008},
where the case $\sigma = \omega_0$ is proved.

\begin{proposition}[Shape permuting operators]\label{prop:thetaShapeTrans}
If $\alpha_j < \alpha_{j+1}$, $\sigma_j = i+1$ and $\sigma_{j+1} =i$ for some $i$, $j$,
then
\begin{equation}\label{eq:macdonaldShapeTrans}
\nonSymE^\sigma_{s_j \alpha}(\xvec; q, t) = \left( \ttheta_i + \frac{1-t}{1-q^{1+\leg u}t^{\arm u}} \right) \nonSymE^\sigma_{\alpha}(\xvec; q, t),
\end{equation}
where  $u = (j+1, \alpha_{j}+1)$ in the diagram of shape $\alpha$.
\end{proposition}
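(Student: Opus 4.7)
The plan is to invoke Lemma~\ref{lem:tpisym}, which reduces the claim to two symmetry verifications. Writing $c := \tfrac{1-t}{1-q^{1+\leg u}t^{\arm u}}$ and rearranging the desired identity as $\ttheta_i \nonSymE^\sigma_\alpha = \nonSymE^\sigma_{s_j\alpha} - c\,\nonSymE^\sigma_\alpha$, the lemma tells us that it is equivalent to show that both
\begin{align*}
(1-c)\,\nonSymE^\sigma_\alpha + \nonSymE^\sigma_{s_j\alpha}
\quad\text{and}\quad
(tx_{i+1}-cx_i)\,\nonSymE^\sigma_\alpha + x_i\,\nonSymE^\sigma_{s_j\alpha}
\end{align*}
are symmetric in $x_i$ and $x_{i+1}$. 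Each symmetry will be derived from Proposition~\ref{prop:latticeDiagramSymmetry}.

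For each of these, I would construct a single lattice-square diagram $\hat{\diagram{S}} = \diagram{B}\cup\diagram{S}$ whose non-attacking fillings split naturally into fillings of shape $\alpha$ and fillings of shape $s_j\alpha$, according to whether row $j$ of the basement carries $i+1$ (with row $j+1$ carrying $i$) or vice versa. Here $\diagram{B}$ encodes the basement entries of $\sigma$ outside $\{i,i+1\}$, and $\diagram{S}$ is chosen so that both rows $j$ and $j+1$ extend to column $\alpha_{j+1}$, accommodating either shape. For the first symmetry, take $\diagram{Y}$ to forbid the entries $\{i,i+1\}$ in the ``extra'' cells of row $j$ (those beyond column $\alpha_j$), so that these cells may only carry ordinary weight. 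For the second symmetry, additionally prepend an auxiliary cell $u'$ constrained by $\diagram{Z}=\{u'\}$ to contain $\{i,i+1\}$, exactly in the style of the proof of Proposition~\ref{prop:basementPermutation}; this supplies the $x_i$ and $x_{i+1}$ prefactors together with the $t$-weight coming from an inversion triple involving the basement.

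The principal obstacle is tracking the distinguished cell $u = (j+1,\alpha_j+1)$, which lies in $\alpha$ but not in $s_j\alpha$. Those non-attacking fillings of shape $\alpha$ for which $F(u)=F(d(u))$ receive the factor $\tfrac{1-t}{1-q^{1+\leg u}t^{1+\arm u}}$ from the product in~\eqref{eq:nonSymmetricMacdonaldBasement}, while $s_j\alpha$-fillings carry no such term since $u \notin s_j\alpha$. Isolating the $u$-cell contribution, summing over its permitted entries (which are governed by the non-attacking condition and the basement configuration), and performing the resulting rational simplification produces precisely the coefficient $c$ appearing in the two desired symmetries. Once this single-cell bookkeeping is done, Proposition~\ref{prop:latticeDiagramSymmetry} applied with the data $(\hat{\diagram{S}},\diagram{B},\diagram{Y},\diagram{Z})$ above delivers both symmetries simultaneously, and Lemma~\ref{lem:tpisym} then concludes the proof.
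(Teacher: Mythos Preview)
Your reduction via Lemma~\ref{lem:tpisym} is correct, but the diagram construction you sketch has a genuine gap. The polynomials $\nonSymE^\sigma_\alpha$ and $\nonSymE^\sigma_{s_j\alpha}$ share the \emph{same} basement $\sigma$; they differ in their \emph{shapes}. So your proposed split ``according to whether row $j$ of the basement carries $i+1$ or vice versa'' does not separate the two families of fillings --- in both cases row $j$ carries $i+1$ and row $j+1$ carries $i$. More seriously, the two shapes have different arm and leg values throughout (not just at the single box $u$), and the type~$A$/$B$ classification of triples in rows $j,j+1$ flips when the row lengths swap. In Proposition~\ref{prop:latticeDiagramSymmetry} the arm/leg values are \emph{fixed} data attached to the diagram, so a single choice of $\hat{\diagram{S}}$ cannot reproduce the weights of both $\nonSymE^\sigma_\alpha$ and $\nonSymE^\sigma_{s_j\alpha}$ simultaneously. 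The boxes that differ between the shapes are all of $(j,\alpha_j{+}1),\dotsc,(j,\alpha_{j+1})$ and $(j+1,\alpha_j{+}1),\dotsc,(j+1,\alpha_{j+1})$, not just $u$; restricting their entries via $\diagram{Y}$ does not remove their weight or their contribution to $\inv$ and $\maj$. The ``single-cell bookkeeping'' you allude to would have to absorb all of this, and you have not indicated how.

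The paper does not attempt a direct filling argument here at all. Instead it takes the case $\sigma=\omega_0$ as already known from \cite{HaglundNonSymmetricMacdonald2008} and then \emph{transports} the identity to every other basement. Concretely, one shows that if \eqref{eq:macdonaldShapeTrans} holds for a triple $(\sigma,i,j)$, then applying $\ttheta_k$ or $\tpi_k$ (for $|k-i|\ge 2$) to both sides and using Proposition~\ref{prop:basementPermutation} gives the identity for $(\sigma s_k,i,j)$; and applying $\tpi_i\tpi_{i-1}$ or $\ttheta_i\ttheta_{i-1}$ together with the mixed braid relations of Lemma~\ref{lem:braidRelations} moves between $(\sigma,i,j)$ and $(\sigma s_{i-1}s_i,i-1,j)$. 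These moves connect every admissible $(\sigma,i,j)$ to the base case $\sigma=\omega_0$. The combinatorics of fillings is thus confined entirely to the basement-permuting proposition, where the shape is held fixed and Proposition~\ref{prop:latticeDiagramSymmetry} applies cleanly.
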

\begin{proof}
The case $\sigma = \omega_0$ is our base case in an inductive argument over different basements.
It is enough to show the following equalities (for fixed $\alpha$):
\begin{enumerate}
\item Equation \eqref{eq:macdonaldShapeTrans} holds for the triple $(\sigma, i, j)$
	if and only if it holds for $(\sigma s_k, i, j)$ if $k \neq \{i-1, i, i+1\}$.

\item Equation \eqref{eq:macdonaldShapeTrans} holds for $(\sigma, i, j)$ with $(\sigma_{j-1}, \sigma_j, \sigma_{j+1}) = (i-1,i+1,i)$
if and only if it holds for $(\sigma s_{i-1} s_i, i-1, j)$.

\item Equation \eqref{eq:macdonaldShapeTrans} holds for $(\sigma, i, j)$ where $(\sigma_{j}, \sigma_{j+1}, \sigma_{j+2}) = (i+1,i,i-1)$
if and only if it holds for $(\sigma s_{i-1} s_i, i-1, j)$.
\end{enumerate}
The first equality ensures that we are free to permute the basement labels not involving $i$ and $i+1$.
The last two equalities allow us to increase (decrease) the basement labels on rows $j$, $j+1$ by one,
provided that there is also a third row where the label is decreased (increased) by two.
It is easy to see that with these operations, one can reach any configuration $(\sigma,i,j)$
satisfying the conditions in \cref{prop:thetaShapeTrans} from the base case.

\medskip

Consider the statement
\begin{equation}\label{eq:macdonaldShapeTransSimp}
\nonSymE^\sigma_{s_j \alpha}(\xvec; q, t) = \left( \ttheta_i + C_u \right) \nonSymE^\sigma_{\alpha}(\xvec; q, t),
\qquad C_u = \frac{1-t}{1-q^{1+\leg u}t^{\arm u}}.
\end{equation}
\noindent
\textbf{Case 1:} We want to show that \eqref{eq:macdonaldShapeTrans} holds
for the left configuration in \eqref{eq:casePermAbove} if and only if it holds for the right hand side.
Note that \eqref{eq:casePermAbove} only illustrate on of several possible relative positions of $k$, $k+1$ and $i$.
\begin{equation}\label{eq:casePermAbove}
\ytableausetup{centertableaux,boxsize=1.6em}
\begin{ytableau}
\mathbf{k}& \cdots \\
\none[\vdots] \\
\scriptstyle{\mathbf{i+1}} & \cdots & \\
\mathbf{i}   & \cdots &  & u & \cdots \\
\none[\vdots] \\
\scriptstyle{\mathbf{k+1}}& \cdots \\
\end{ytableau}
\qquad
\Longleftrightarrow 
\qquad
\ytableausetup{centertableaux,boxsize=1.6em}
\begin{ytableau}
\scriptstyle{\mathbf{k+1}}& \cdots \\
\none[\vdots] \\
\scriptstyle{\mathbf{i+1}} & \cdots & \\
\mathbf{i}   & \cdots &  & u & \cdots \\
\none[\vdots] \\
\mathbf{k}& \cdots \\
\end{ytableau} %\tag{\text{Case 1}}
\end{equation}
We apply $\ttheta_k$ or $\tpi_k$ depending on if $k+1$ appears below or above $k$, respectively,
on both sides of \eqref{eq:macdonaldShapeTransSimp}. Both $\ttheta_k$ and $\tpi_k$
commute with $\ttheta_i$ since $|k-i| \geq 2$ and we obtain
\begin{equation}\label{eq:macdonaldShapeTransSimp2}
t^\ast \cdot \nonSymE^{\sigma s_k}_{s_j \alpha}(\xvec; q, t) = t^\ast \cdot \left( \ttheta_i + C_u \right) \nonSymE^{\sigma s_k}_{\alpha}(\xvec; q, t)
\end{equation} 
using \cref{prop:basementPermutation}. The factor $t^\ast$ depends on the relative lengths of rows with basement label $k$ and $k+1$,
but it is the same on both sides.
Since going from \eqref{eq:macdonaldShapeTransSimp}
to \eqref{eq:macdonaldShapeTransSimp2} is invertible, we have the desired equality.

\medskip 

\noindent
\textbf{Case 2:} To get from the basement $\sigma$ in the left hand side in \eqref{eq:casePermMove2} to the basement in the right hand side,
we need to perform $s_{i-1}$ followed by $s_i$ as right multiplication.
\begin{equation}\label{eq:casePermMove2}
\ytableausetup{centertableaux,boxsize=1.6em}
\begin{ytableau}
\scriptstyle{\mathbf{i-1}}& \cdots \\ 
\scriptstyle{\mathbf{i+1}} & \cdots & \\
\mathbf{i}   & \cdots &  & u & \cdots \\
\end{ytableau}
\qquad
\Longleftrightarrow 
\qquad
\ytableausetup{centertableaux,boxsize=1.6em}
\begin{ytableau}
\scriptstyle{\mathbf{i+1}}& \cdots \\ 
\mathbf{i} & \cdots & \\
\scriptstyle{\mathbf{i-1}}   & \cdots &  & u & \cdots \\
\end{ytableau}%\tag{\text{Case 2}}
\end{equation}
Note that $\length(\sigma) < \length(\sigma s_{i-1}) < \length(\sigma s_{i-1} s_i)$,
so to transform the basement in \eqref{eq:macdonaldShapeTransSimp} from $\sigma$ to $\sigma s_{i-1} s_i$,
we need to apply $\tpi_{i-1}$ followed by $\tpi_i$. We get
\begin{equation*}
\left(\tpi_i\tpi_{i-1}\right)\nonSymE^\sigma_{s_j \alpha}(\xvec; q, t) =
\left(\tpi_i\tpi_{i-1}\right) \left( \ttheta_i + C_u \right) \nonSymE^\sigma_{\alpha}(\xvec; q, t).
\end{equation*}
The right hand side is expanded and the square brackets has been rewritten using \cref{lem:braidRelations}:
\begin{align*}
\left(\tpi_i\tpi_{i-1}\right) \nonSymE^{\sigma}_{s_j \alpha}(\xvec; q, t) &=  \left[\ttheta_{i-1} \tpi_i\tpi_{i-1}\right] \nonSymE^\sigma_{\alpha}(\xvec; q, t)  + C_u  \left(\tpi_i\tpi_{i-1}\right) \nonSymE^{\sigma}_{\alpha}(\xvec; q, t) \\
&=  \left( \ttheta_{i-1}   + C_u \right)  \left(\tpi_i\tpi_{i-1}\right) \nonSymE^{\sigma}_{\alpha}(\xvec; q, t).
\end{align*} 
The operators $\tpi_i\tpi_{i-1}$ now act on the basement, giving a factor $t^\ast$.
Note that this factor is the same on both sides
since the comparisons performed on row lengths are the same for $\alpha$ and $s_j \alpha$.
The end result is the relation
\[
\nonSymE^{\sigma s_{i-1} s_i}_{s_j \alpha}(\xvec; q, t) =  \left[ \ttheta_{i-1}   + C_u \right]\nonSymE^{\sigma  s_{i-1} s_i}_{\alpha}(\xvec; q, t)
\]
which is what we wish to prove. Since every step has an inverse, we have the desired equivalence.

\medskip 

\noindent
\textbf{Case 3:} This case, showing the equivalence in \cref{eq:casePermMove3},
is performed in the same manner as in the previous case, now using $\ttheta_{i-1}$ followed by $\ttheta_i$ to
go from the configuration in the left hand side to the one in the right hand side.
\begin{equation}\label{eq:casePermMove3}
\ytableausetup{centertableaux,boxsize=1.6em}
\begin{ytableau}
\scriptstyle{\mathbf{i+1}} & \cdots & \\
\mathbf{i}   & \cdots &  & u & \cdots \\
\scriptstyle{\mathbf{i-1}}& \cdots \\
\end{ytableau}
\qquad
\Longleftrightarrow
\qquad
\ytableausetup{centertableaux,boxsize=1.6em}
\begin{ytableau}
\mathbf{i} & \cdots & \\
\scriptstyle{\mathbf{i-1}}   & \cdots &  & u & \cdots \\
\scriptstyle{\mathbf{i+1}}& \cdots \\
\end{ytableau} %\tag{\text{Case 3}}
\end{equation}
We apply $\ttheta_i\ttheta_{i-1}$ on both sides of \eqref{eq:macdonaldShapeTransSimp} and obtain
\begin{equation*}
\left(\ttheta_i\ttheta_{i-1}\right)\nonSymE^\sigma_{s_j \alpha}(\xvec; q, t) = \left(\ttheta_i\ttheta_{i-1}\right) \left( \ttheta_i + C_u \right) \nonSymE^\sigma_{\alpha}(\xvec; q, t).
\end{equation*}
Expanding the right hand side, where the braid relation has been used in the square bracket, gives
\begin{align*}
\left(\ttheta_i\ttheta_{i-1}\right) \nonSymE^{\sigma}_{s_j \alpha}(\xvec; q, t) &=  \left[ \ttheta_{i-1} \ttheta_i\ttheta_{i-1}\right] \nonSymE^\sigma_{\alpha}(\xvec; q, t)  + C_u  \left(\ttheta_i\ttheta_{i-1}\right) \nonSymE^{\sigma}_{\alpha}(\xvec; q, t) \\
&=  \left[ \ttheta_{i-1}   + C_u \right]  \left(\ttheta_i\ttheta_{i-1}\right)  \nonSymE^{\sigma}_{\alpha}(\xvec; q, t).
\end{align*}
As in the previous case, this implies the equality.
\end{proof}

Note that for any $A$ not depending on the $x_i$, we can invert the operator $(\ttheta_i + A)$.
We have that
\[
 (\ttheta_i + A)^{-1} = \frac{(A+t-1-\ttheta_i)}{(A-1)(A-t)},
\]
which is easy to prove using $\ttheta_i^2 = (t-1)\ttheta_i + t$.  
This fact together with \cref{prop:thetaShapeTrans} implies the following proposition.

\begin{proposition}[Shape permuting operators II]\label{prop:thetaShapeTrans2}
If $\alpha_j > \alpha_{j+1}$, $\sigma_j = i+1$ and $\sigma_{j+1} =i$ for some $i$, $j$, then
\begin{equation}\label{eq:macdonaldShapeTrans2}
\nonSymE^\sigma_{s_j\alpha}(\xvec; q, t) = \frac{ (C_u + t - 1 - \ttheta_i) \nonSymE^\sigma_{\alpha}(\xvec; q, t) }{ (C_u-1)(C_u-t) }
\end{equation}
where $C_u =  \frac{1-t}{1-q^{1+\leg u}t^{1+\arm u}}$ and $u = (j, \alpha_{j+1}+1)$ in the diagram with shape $\alpha$.
\end{proposition}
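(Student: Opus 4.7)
The plan is to derive the identity by inverting the operator in Proposition \ref{prop:thetaShapeTrans}, applied to the swapped shape $\beta = s_j \alpha$. Since $\alpha_j > \alpha_{j+1}$ implies $\beta_j < \beta_{j+1}$, and the basement condition $\sigma_j = i+1$, $\sigma_{j+1} = i$ is unchanged, Proposition \ref{prop:thetaShapeTrans} applies to $\beta$ and gives
\[
\nonSymE^\sigma_{\alpha}(\xvec;q,t) = \nonSymE^\sigma_{s_j \beta}(\xvec;q,t) = \bigl( \ttheta_i + C_{u'} \bigr) \nonSymE^\sigma_{s_j \alpha}(\xvec;q,t),
\]
where $u' = (j+1, \beta_j + 1) = (j+1, \alpha_{j+1}+1)$ sits in the diagram of shape $\beta$.

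Next, I would apply the inversion formula $(\ttheta_i + A)^{-1} = (A+t-1-\ttheta_i)/((A-1)(A-t))$ with $A = C_{u'}$, which the paper has already justified using $\ttheta_i^2 = (t-1)\ttheta_i + t$. This directly produces the claimed expression, modulo the identification of $C_{u'}$ (the constant from Proposition \ref{prop:thetaShapeTrans} read off in shape $\beta$, with the $t$ exponent $\arm u'$) with $C_u$ in the present proposition (whose $t$ exponent is $1 + \arm u$, read off in shape $\alpha$).

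The only technical step is therefore to verify that $\leg u' = \leg u$ while $\arm u' = \arm u + 1$. Both $u' = (j+1,\alpha_{j+1}+1)$ in $\beta$ and $u = (j,\alpha_{j+1}+1)$ in $\alpha$ sit at the end of a row of length $\alpha_j$, so their legs equal $\alpha_j - \alpha_{j+1} - 1$. For the arms, inspection of the two-set definition shows that the contributions from rows $r' < j$ and $r' > j+1$ match under the identification of the shapes, since $\alpha_{r'} = \beta_{r'}$ outside rows $j$ and $j+1$. The only discrepancy comes from row $j$ of $\beta$: since $\beta_j = \alpha_{j+1} < \alpha_j = \beta_{j+1}$ and $(j, \alpha_{j+1}) \in \beta$, this row contributes one extra element to the second set in the arm of $u'$, and there is no corresponding contribution from row $j+1$ (whose length $\alpha_{j+1}$ means the box in column $\alpha_{j+1}+1$ is absent). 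This gives $\arm u' = \arm u + 1$ and hence $C_{u'} = (1-t)/(1-q^{1+\leg u} t^{1+\arm u}) = C_u$.

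The main obstacle, and indeed the only real content beyond bookkeeping, is this arm/leg comparison between the two shapes $\alpha$ and $s_j \alpha$ at the corresponding corner boxes; the inversion formula and Proposition \ref{prop:thetaShapeTrans} do all the rest of the work.
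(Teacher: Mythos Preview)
Your argument is correct and is exactly the approach the paper takes: the proposition is obtained by applying Proposition~\ref{prop:thetaShapeTrans} to $\beta = s_j\alpha$ and then using the inversion formula $(\ttheta_i + A)^{-1} = (A+t-1-\ttheta_i)/((A-1)(A-t))$. Your careful verification that $\leg u' = \leg u$ and $\arm u' = \arm u + 1$ (so that $C_{u'} = C_u$) fills in the bookkeeping that the paper leaves to the reader.
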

% \begin{proof}
% The shape $s_j\alpha$ and basement $\sigma$ satisfy the conditions in \cref{prop:thetaShapeTrans}, so
% \begin{align}\label{eq:step}
% \nonSymE^\sigma_{\alpha}(\xvec; q, t) = \left( \ttheta_i + C_u \right) \nonSymE^\sigma_{s_j\alpha}(\xvec; q, t).
% \end{align}
% Note that the expression for $C_u$ is looks slightly different from that in \cref{prop:thetaShapeTrans}
% in order to compensate for the fact that we compute arm and leg values in $s_j\alpha$ and not $\alpha$.
% Apply $\ttheta_i$ on both sides of \eqref{eq:step} and use the fact that $\ttheta_i^2 = (t-1)\ttheta_i + t$:
% \[
%   \ttheta_i\nonSymE^\sigma_{\alpha}(\xvec; q, t) = \left[(t-1)\ttheta_i + t + C_u \ttheta_i\right] \nonSymE^\sigma_{s_j\alpha}(\xvec; q, t). 
% \]
% This now gives
% \[
%  \ttheta_i\nonSymE^\sigma_{\alpha}(\xvec; q, t) =
% t \nonSymE^\sigma_{s_j\alpha}(\xvec; q, t) +  \left[(t-1) +  C_u \right] \ttheta_i \nonSymE^\sigma_{s_j\alpha}(\xvec; q, t).
% \]
% Solving for $\ttheta_i \nonSymE^\sigma_{s_j\alpha}(\xvec; q, t)$ in \eqref{eq:step} and
% substituting in the expression above gives
% \[
% \ttheta_i\nonSymE^\sigma_{\alpha}(\xvec; q, t) =
% t \nonSymE^\sigma_{s_j\alpha}(\xvec; q, t) +  \left[(t-1) +  C_u \right] \left[ \nonSymE^\sigma_{\alpha}(\xvec; q, t) - C_u  \nonSymE^\sigma_{s_j\alpha}(\xvec; q, t) \right] .
% \]
% Finally, solving for $\nonSymE^\sigma_{s_j\alpha}(\xvec; q, t)$ gives the expression in \eqref{eq:macdonaldShapeTrans2}.
% \end{proof}

These two identities tell us how $\ttheta_i$ act on non-symmetric Macdonald polynomials.
The case $\alpha_j=\alpha_{j+1}$ is discussed in the next section.
The special cases with $\sigma = \omega_0$ or $\sigma = \id$
appear in various places, \cite{mimachi1998,bakerForrester1997,Marshall1999}.

\section{Partial symmetries}

The goal of this section is to prove partial symmetries of non-symmetric Macdonald polynomials.
More specifically, if the shape $\alpha$ and basement $\sigma$ are such that the augmented diagram is of the form
\begin{equation}
\ytableausetup{centertableaux,boxsize=1.6em}
\begin{ytableau}
\none[\vdots] \\
\scriptstyle{\mathbf{i+1}} & & \cdots &   \\
\mathbf{i}                 & & \cdots &   \\
\none[\vdots]  \\
\end{ytableau}
\qquad
\text{ or }
\qquad
\begin{ytableau}
\none[\vdots] \\
\mathbf{i} & &\cdots &    \\
\scriptstyle{\mathbf{i+1}}    &    & \cdots &   \\
\none[\vdots]  \\
\end{ytableau}
\end{equation}
where two adjacent rows have equal lengths and the basement labels differ by $1$,
then the corresponding Macdonald polynomial $\nonSymE^\sigma_\alpha(\xvec;q,t)$
is symmetric in $x_i x_{i+1}$.
We prove this by showing several implications, which turns into an inductive proof.

In \cite{Marshall1999}, the polynomials $\nonSymE^{\id}_\alpha(\xvec,q,t)$ were studied.
For example, he derives the analogous formulas for the shape-permuting operators.
We need the following statement from the same section --- now translated to our notation --- which appears in \cite[Equation (3.4)]{Marshall1999}:
\begin{lemma}\label{lem:marshallAtomSymmetry}
Suppose $\alpha_i=\alpha_{i+1}$. Then $\theta_i \nonSymE^{\id}_\alpha(\xvec;q,t) = t \nonSymE^{\id}_\alpha(\xvec;q,t).$
\end{lemma}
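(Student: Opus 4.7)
My plan is to derive this lemma by translating \cite[Eq.~(3.4)]{Marshall1999} into the conventions of the present paper, rather than giving a new argument from scratch. The polynomials $\nonSymE^{\id}_\alpha(\xvec;q,t)$ are (up to the parameter inversion and variable reversal supplied by \cref{prop:specToAtom}) exactly the family Marshall studies in his Section~3, and his $\theta_i$ is the Demazure--Lusztig operator acting as a generator of the affine Hecke algebra. Under the dictionary, his (3.4) reads precisely as the eigenvalue identity claimed here.

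Concretely, I would proceed in three bookkeeping steps. First, record the correspondence between Marshall's polynomials and our $\nonSymE^{\id}_\alpha$, using \cref{prop:specToAtom}: this entails substituting $(x_1,\dotsc,x_n;q,t) \leftrightarrow (x_n,\dotsc,x_1;q^{-1},t^{-1})$ and checking that Marshall's Demazure--Lusztig generator matches our $\ttheta_i$ after this substitution, with the quadratic Hecke relation $\ttheta_i^2 = (t-1)\ttheta_i + t$ serving as the comparison tool. Second, observe that the hypothesis $\alpha_i = \alpha_{i+1}$ is invariant under the reindexing, so the combinatorial condition is the same in both papers. Third, verify that the scalar eigenvalue $t$ survives the translation; this is where one must be most careful, since $t \leftrightarrow t^{-1}$ under the parameter inversion, and the correct eigenvalue on our side comes out as $t$ rather than $t^{-1}$ because the quadratic relation is symmetric in $t \leftrightarrow t^{-1}$ only after an appropriate rescaling of the generator.

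The main obstacle is really just that obstacle of normalization: different sources write the affine Hecke generators in several mutually incompatible ways (some prefer $T_i$, some $T_i^{-1}$, some reverse the sign in the quadratic relation), and one must align these conventions before the cited equation can be read off directly. Once the normalizations are matched, no additional computation is required. An alternative self-contained route would be to induct on $|\alpha|$ using the Knop--Sahi-type recurrence \eqref{eq:knopRelation} together with the quadratic relation $\ttheta_i^2 = (t-1)\ttheta_i + t$, but this would essentially reproduce Marshall's argument, so citing him is the more economical option.
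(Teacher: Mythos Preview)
Your proposal is correct and coincides with the paper's treatment: the paper does not give an independent proof of this lemma but simply cites \cite[Equation~(3.4)]{Marshall1999}, translated into the present notation, and remarks that a combinatorial proof via \cref{prop:latticeDiagramSymmetry} would be of interest. Your bookkeeping about the parameter/variable dictionary from \cref{prop:specToAtom} is a faithful elaboration of what that translation entails.
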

It would be interesting go give a combinatorial
proof of this identity using \cref{prop:latticeDiagramSymmetry}.

\begin{lemma}\label{lem:symmetryImplicationsI}
Suppose $\alpha_j = \alpha_{j+1}$ and $\{\sigma_j, \sigma_{j+1} \} = \{i, i+1\}$ for some $j$, $i$.
Then the following statements are equivalent:
\begin{enumerate}
 \item $\nonSymE_\alpha^\sigma(\xvec;q,t)$ is symmetric in $x_i x_{i+1}$,
 \item $\ttheta_i \nonSymE_\alpha^\sigma(\xvec;q,t) = t \nonSymE_\alpha^\sigma(\xvec;q,t) $,
 \item $\nonSymE_\alpha^{\sigma s_i}(\xvec;q,t)$ is symmetric in $x_i x_{i+1}$,
 \item $\nonSymE_\alpha^{\sigma s_k}(\xvec;q,t)$, $k \notin \{i-1,i,i+1 \}$, is symmetric in $x_i x_{i+1}$.
\end{enumerate}
\end{lemma}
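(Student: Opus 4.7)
The plan is to establish the three biconditionals $(1)\Leftrightarrow(2)$, $(1)\Leftrightarrow(3)$, and $(1)\Leftrightarrow(4)$ individually. For $(1)\Leftrightarrow(2)$, I would invoke \cref{lem:tpisym} with $g = tf$: the two resulting symmetry conditions $f + g = (1+t)f$ and $t x_{i+1} f + x_i g = t(x_i + x_{i+1})f$ both reduce to ``$f$ is symmetric in $x_i x_{i+1}$'' (using $1+t \neq 0$ in $\setQ(q,t)$), which delivers the biconditional $\ttheta_i f = tf \Leftrightarrow f$ is symmetric in $x_i x_{i+1}$.

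For $(1)\Rightarrow(3)$, I would prove the stronger assertion that under (1) one has the literal equality $\nonSymE_\alpha^\sigma = \nonSymE_\alpha^{\sigma s_i}$, from which (3) is immediate. The hypothesis $\alpha_j = \alpha_{j+1}$ together with $\{\sigma_j,\sigma_{j+1}\} = \{i,i+1\}$ forces $\gamma_i = \gamma_{i+1}$ in the notation of \cref{prop:basementPermutation}. In the branch $\length(\sigma s_i) < \length(\sigma)$, that proposition gives $\ttheta_i \nonSymE_\alpha^\sigma = t\,\nonSymE_\alpha^{\sigma s_i}$; combining with (2) yields the equality. In the branch $\length(\sigma s_i) > \length(\sigma)$, one instead has $\tpi_i \nonSymE_\alpha^\sigma = \nonSymE_\alpha^{\sigma s_i}$, and expanding via \eqref{eq:tpi-in-ttheta} gives $\tpi_i \nonSymE_\alpha^\sigma = \ttheta_i \nonSymE_\alpha^\sigma + (1-t)\nonSymE_\alpha^\sigma = t\nonSymE_\alpha^\sigma + (1-t)\nonSymE_\alpha^\sigma = \nonSymE_\alpha^\sigma$ using (2), producing the same equality. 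The converse $(3)\Rightarrow(1)$ follows by rerunning the argument with $\sigma$ replaced by $\sigma s_i$, whose hypotheses remain satisfied.

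For $(1)\Leftrightarrow(4)$, the fourth bullet of \cref{lem:operatorSymmetry} says that $\ttheta_k$ preserves $x_i x_{i+1}$-symmetry whenever $k \notin \{i-1,i+1\}$; combined with \eqref{eq:tpi-in-ttheta}, the same holds for $\tpi_k$. Since $k \notin \{i-1,i,i+1\}$, both operators preserve this symmetry. By \cref{prop:basementPermutation}, $\nonSymE_\alpha^\sigma$ is related to $\nonSymE_\alpha^{\sigma s_k}$ by one of $\ttheta_k$ or $\tpi_k$ (depending on the length comparison) together with a nonzero scalar in $\{1,t\}$, and the analogous statement applies with $\sigma$ and $\sigma s_k$ swapped, so $x_i x_{i+1}$-symmetry transfers in both directions.

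The main obstacle will be the $(1)\Rightarrow(3)$ step: one must split into the two length branches of \cref{prop:basementPermutation} and carefully track which operator and scalar applies in each. Once that case analysis is handled, the remainder of the argument is largely bookkeeping with the operator identities already in hand.
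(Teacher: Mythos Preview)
Your proposal is correct and follows essentially the same route as the paper. The paper's proof is a one-liner citing \cref{lem:tpisym} for $(1)\Leftrightarrow(2)$, \cref{prop:basementPermutation} together with \cref{lem:operatorSymmetry} for $(2)\Leftrightarrow(3)$, and \cref{lem:operatorSymmetry} for $(1)\Leftrightarrow(4)$; your argument unpacks exactly these citations, handling both length branches of \cref{prop:basementPermutation} explicitly and deriving the intermediate equality $\nonSymE_\alpha^\sigma = \nonSymE_\alpha^{\sigma s_i}$ under (1).
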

\begin{proof}
We have that $(1) \Leftrightarrow (2)$ using \cref{lem:tpisym},
$(2) \Leftrightarrow (3)$ using \cref{prop:basementPermutation} and \cref{lem:operatorSymmetry},
and finally $(1) \Leftrightarrow (4)$ by using \cref{lem:operatorSymmetry}.
\end{proof}

\begin{lemma}\label{lem:symmetryImplicationsII}
Suppose $\alpha_j = \alpha_{j+1}$ and $\{\sigma_j, \sigma_{j+1} \} = \{i, i+1\}$ for some $j$, $i\geq 2$.
Then the following statements are equivalent:
\begin{enumerate}
 \item $\nonSymE_\alpha^\sigma(\xvec;q,t)$ is symmetric in $x_i x_{i+1}$,
 \item $\nonSymE_\alpha^{\sigma s_{i-1} s_i} (\xvec;q,t)$ is symmetric in $x_{i-1} x_{i}$.
\end{enumerate}
\end{lemma}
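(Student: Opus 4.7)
The plan is to translate both symmetry statements into eigenequations for $\ttheta$-operators and then connect them by a suitable intertwining operator built from Demazure--Lusztig operators.

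First, by \cref{lem:symmetryImplicationsI}(2) applied to $\sigma$, statement $(1)$ is equivalent to $\ttheta_i \nonSymE_\alpha^\sigma(\xvec;q,t) = t\, \nonSymE_\alpha^\sigma(\xvec;q,t)$. The same lemma applied to the basement $\sigma s_{i-1} s_i$ (whose positions $j, j+1$ carry the labels $\{i-1, i\}$, since the two swaps $s_{i-1}$ then $s_i$ act on the fixed positions of the labels $i-1$, $i$, $i+1$) shows that $(2)$ is equivalent to $\ttheta_{i-1} \nonSymE_\alpha^{\sigma s_{i-1} s_i}(\xvec;q,t) = t\, \nonSymE_\alpha^{\sigma s_{i-1} s_i}(\xvec;q,t)$. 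It therefore suffices to produce an invertible operator $P$ satisfying $\ttheta_{i-1} P = P \ttheta_i$ together with $P \nonSymE_\alpha^\sigma = C\, \nonSymE_\alpha^{\sigma s_{i-1} s_i}$ for some nonzero constant $C \in \setQ(q,t)$.

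Let $k = \sigma^{-1}(i-1)$. Since $i-1 \notin \{i, i+1\}$, we have $k \notin \{j, j+1\}$, which splits the argument into two cases. If $k < j$, a direct count of inversions shows that both basement swaps $\sigma \to \sigma s_{i-1} \to \sigma s_{i-1} s_i$ are length-increasing, so two applications of \cref{prop:basementPermutation} yield
\[
\tpi_i \tpi_{i-1}\, \nonSymE_\alpha^\sigma(\xvec;q,t) = C \cdot \nonSymE_\alpha^{\sigma s_{i-1} s_i}(\xvec;q,t), \qquad C \in \{1, t, t^2\}.
\]
I then take $P = \tpi_i \tpi_{i-1}$; the first mixed braid relation of \cref{lem:braidRelations}, $\tpi_i \tpi_{i-1} \ttheta_i = \ttheta_{i-1} \tpi_i \tpi_{i-1}$, gives exactly $\ttheta_{i-1} P = P \ttheta_i$. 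If instead $k > j+1$, the analogous inversion count shows that both basement swaps are length-decreasing, so \cref{prop:basementPermutation} now yields $\ttheta_i \ttheta_{i-1}\, \nonSymE_\alpha^\sigma = C \cdot \nonSymE_\alpha^{\sigma s_{i-1} s_i}$ for some nonzero $C$, and I take $P = \ttheta_i \ttheta_{i-1}$; the intertwining relation is then just the ordinary braid relation $\ttheta_{i-1} \ttheta_i \ttheta_{i-1} = \ttheta_i \ttheta_{i-1} \ttheta_i$.

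In both cases $P$ is invertible (each $\ttheta_k$ and $\tpi_k$ is a unit of the operator algebra, as $\ttheta_k \tpi_k = t$), so the intertwining identity immediately propagates the eigenequation:
\[
\ttheta_i \nonSymE_\alpha^\sigma = t\, \nonSymE_\alpha^\sigma \iff \ttheta_{i-1} (P \nonSymE_\alpha^\sigma) = t\,(P \nonSymE_\alpha^\sigma) \iff \ttheta_{i-1} \nonSymE_\alpha^{\sigma s_{i-1} s_i} = t\, \nonSymE_\alpha^{\sigma s_{i-1} s_i},
\]
the last step using $P \nonSymE_\alpha^\sigma = C\, \nonSymE_\alpha^{\sigma s_{i-1} s_i}$ with $C \neq 0$. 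The main obstacle I expect is lining up the direction of the length changes so that \cref{prop:basementPermutation} produces pure scalar multiples (rather than the linear combinations one obtains from applying $\tpi$ or $\ttheta$ against the length direction); the dichotomy $k < j$ versus $k > j+1$ is designed precisely so that both label swaps proceed in the same length-direction, permitting the clean choices $P = \tpi_i \tpi_{i-1}$ and $P = \ttheta_i \ttheta_{i-1}$ respectively, each with the correct braid identity available in \cref{lem:braidRelations}.
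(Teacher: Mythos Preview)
Your proof is correct and shares the same skeleton as the paper's: both split into two cases according to whether the label $i-1$ sits above or below the rows carrying $\{i,i+1\}$, and both use \cref{prop:basementPermutation} twice to pass from basement $\sigma$ to $\sigma s_{i-1} s_i$ via either $\tpi_i\tpi_{i-1}$ or $\ttheta_i\ttheta_{i-1}$.

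Where you diverge is in the verification that this operator transports the symmetry. The paper checks by brute force that $\tpi_i\tpi_{i-1}$ and $\ttheta_i\ttheta_{i-1}$ send any polynomial symmetric in $x_i,x_{i+1}$ to one symmetric in $x_{i-1},x_i$, calling the calculation ``tedious'' and deferring it to a computer-algebra check on $x_1^a x_2^b x_3^b$. You instead reformulate symmetry as the eigenequation $\ttheta_i f = t f$ via \cref{lem:symmetryImplicationsI} and then invoke the mixed braid relation $\tpi_i\tpi_{i-1}\ttheta_i = \ttheta_{i-1}\tpi_i\tpi_{i-1}$ from \cref{lem:braidRelations} (respectively the ordinary braid relation for $\ttheta$) to get the intertwining $\ttheta_{i-1}P = P\ttheta_i$. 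This is a genuine improvement: it replaces an external computation with relations already established in the paper, it makes the equivalence manifestly bidirectional through the invertibility of $P$, and it explains \emph{why} the paper's monomial computation had to succeed. The paper's approach buys nothing extra here; your argument is simply cleaner.
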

\begin{proof}
\cref{prop:basementPermutation} implies that either
\[
\ttheta_i\ttheta_{i-1}\nonSymE_\alpha^\sigma(\xvec;q,t) = t^\ast \nonSymE_\alpha^{\sigma s_{i-1} s_i} (\xvec;q,t) \text{ or }
\tpi_i\tpi_{i-1}\nonSymE_\alpha^\sigma(\xvec;q,t) = t^\ast \nonSymE_\alpha^{\sigma s_{i-1} s_i}
\]
depending on if the basemen label $i-1$ appear earlier or later than $i$ in $\sigma$.
By linearity, it suffices to verify the stronger statement
that $\ttheta_i\ttheta_{i-1}$ and $\tpi_i\tpi_{i-1}$ maps a monomial symmetric in $x_i x_{i+1}$
to a monomial symmetric in $x_{i-1}x_i$.
This calculation is tedious, but can be verified explicitly with the definition of the Demazure--Lusztig operators.
Note that it is enough to do the computation with $\ttheta_2\ttheta_{1}$ on the monomial $x_1^a x_2^b x_3^b$,
and this can be done symbolically in a modern computer algebra system such as Mathematica.
\end{proof}

We are now ready to prove the main theorem of this section:
\begin{theorem}[Partial symmetry]\label{thm:partialSymmetry}
Suppose $\alpha_j = \alpha_{j+1}$ and that $\{\sigma_j, \sigma_{j+1} \}$ take the values $\{i, i+1\}$ for some $j$, $i$.
Then $\nonSymE_\alpha (\xvec;q,t)$ is symmetric in $x_i x_{i+1}$.
\end{theorem}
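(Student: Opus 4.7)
The plan is to reduce any instance of the theorem to the base case $\sigma = \id$, which is handled directly by Marshall's \cref{lem:marshallAtomSymmetry}. \Cref{lem:symmetryImplicationsI,lem:symmetryImplicationsII} just established provide moves on the pair $(\sigma, i)$ that each preserve the statement ``$\nonSymE_\alpha^\sigma(\xvec;q,t)$ is symmetric in $x_i x_{i+1}$'', so the theorem reduces to finding, for every valid triple $(\sigma, \alpha, i)$, a sequence of such moves landing at the base case. In the base case $\sigma = \id$, the hypothesis $\{\sigma_j, \sigma_{j+1}\} = \{i, i+1\}$ forces $j = i$, hence $\alpha_i = \alpha_{i+1}$, and Marshall's lemma yields $\ttheta_i \nonSymE_\alpha^{\id} = t\, \nonSymE_\alpha^{\id}$; the equivalence $(1) \Leftrightarrow (2)$ of \cref{lem:symmetryImplicationsI} then converts this into the required symmetry in $x_i x_{i+1}$.

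For the inductive step I would argue on the lexicographic pair $(i, \length(\sigma))$. Given $\sigma \neq \id$, pick a descent $k$ of $\sigma$, so that $\length(\sigma s_k) < \length(\sigma)$. If $k = i$, apply \cref{lem:symmetryImplicationsI}~(3) to reduce to $(\sigma s_i, \alpha, i)$; if $k \notin \{i-1, i, i+1\}$, apply \cref{lem:symmetryImplicationsI}~(4) to reduce to $(\sigma s_k, \alpha, i)$. Either move strictly decreases $\length(\sigma)$ at fixed $i$, so the length-induction closes the case. If every descent of $\sigma$ happens to lie in $\{i-1, i+1\}$ and $i \geq 2$, invoke \cref{lem:symmetryImplicationsII} to pass to $(\sigma s_{i-1} s_i, \alpha, i-1)$, strictly decreasing the first lexicographic coordinate and freeing up further \cref{lem:symmetryImplicationsI} moves at the new, smaller index.

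The main obstacle is terminating bookkeeping, particularly at the boundary $i = 1$ where \cref{lem:symmetryImplicationsII} cannot be used to decrease $i$. Here one uses the same lemma in the reverse direction, replacing $(\sigma, 1)$ by $(\sigma s_2 s_1, 2)$, which produces a basement whose descent structure can then be simplified by further \cref{lem:symmetryImplicationsI}~(4) moves at the new index; iterating this eventually returns to a strictly smaller lex pair $(i, \length(\sigma))$. Throughout, one must verify that the hypothesis persists after each move: namely, that there is still a position pair $(j', j'+1)$ with $\alpha_{j'} = \alpha_{j'+1}$ and $\{\sigma'_{j'}, \sigma'_{j'+1}\} = \{i', i'+1\}$ for the updated $(\sigma', i')$. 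This is straightforward because the \cref{lem:symmetryImplicationsI} moves leave the distinguished positions $j, j+1$ and their basement values fixed, while \cref{lem:symmetryImplicationsII} shifts the value pair and the position pair in tandem in a way that preserves the equal-parts condition on $\alpha$.
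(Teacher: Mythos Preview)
Your overall architecture matches the paper's exactly: establish the base case $\sigma=\id$ via \cref{lem:marshallAtomSymmetry} and the equivalence $(1)\Leftrightarrow(2)$ of \cref{lem:symmetryImplicationsI}, then use the moves of \cref{lem:symmetryImplicationsI,lem:symmetryImplicationsII} to connect an arbitrary valid $(\sigma,i)$ to the base case. The paper itself is informal on the connectivity step, simply asserting (in analogy with the proof of \cref{prop:thetaShapeTrans}) that these moves suffice.

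However, your explicit induction scheme has a genuine termination gap. With the lexicographic measure $(i,\length(\sigma))$ and the rule ``at $i\geq 2$ always apply \cref{lem:symmetryImplicationsII} to pass to $i-1$; at $i=1$ reverse it to pass to $i=2$'', you can loop. Take $n=4$, $\sigma=(3,4,1,2)$, $j=3$, $i=1$: the only descent is at value $2$, so you pass to $(\sigma s_2 s_1,2)=((1,4,2,3),2)$. There the only descent is at value $3=i+1$, so by your rule you pass back to $((1,4,2,3)s_1 s_2,1)=((3,4,1,2),1)$, the starting point. Your claim that ``iterating this eventually returns to a strictly smaller lex pair'' is therefore false as stated.

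The easy repair is to drop the lexicographic measure and induct on $\length(\sigma)$ alone, choosing the direction of \cref{lem:symmetryImplicationsII} \emph{towards} an existing descent. If in case~(c) the descent is at $i+1$, pass to $(\sigma s_{i+1}s_i,\,i+1)$; if at $i-1$, pass to $(\sigma s_{i-1}s_i,\,i-1)$. In either case one checks (using that the values $i,i+1$ sit in adjacent positions $j,j+1$) that both simple transpositions shorten, so $\length$ drops by~$2$. This removes the need for any special treatment of $i=1$.
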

\begin{proof}

The first two items in \cref{lem:symmetryImplicationsI} together with \cref{lem:marshallAtomSymmetry}
implies that the statement is true whenever $\sigma = \id$.

We now argue in the same manner as in the proof of \cref{prop:thetaShapeTrans}:
\cref{lem:symmetryImplicationsI} together with \cref{lem:symmetryImplicationsII}
implies that we can permute the basement as long as $\sigma_j$ and $\sigma_{j+1}$ differ by one,
and still having the statement in the theorem to be true.

In other words, we can reach any basement where $\{\sigma_j, \sigma_{j+1} \}$ take the values $\{i, i+1\}$,
using the operations on the basement described in the previous two lemmas, all while preserving the symmetry property.
\end{proof}

Let $\alpha \sim \gamma$ indicate that $\alpha$ and $\gamma$ are permutations of the same partition.

\begin{corollary}\label{prop:invariantSubspace}
Fix a shape $\alpha$ and let $V$ be the subspace in $\setQ(q,t)[\xvec]$
spanned by $\{ \nonSymE^{\omega_0}_{\gamma}(\xvec; q, t) : \gamma \sim \alpha \}$.
Let $\gamma \sim \alpha$. Then
\begin{equation}
\ttheta_i\nonSymE^{\omega_0}_{\gamma}(\xvec; q, t) \in V, \; \tpi_i\nonSymE^{\omega_0}_{\gamma}(\xvec; q,t) \in V
 \text{ and } \;
 \nonSymE^\sigma_{\gamma}(\xvec; q, t) \in V,
\end{equation}
for any $i$ and $\sigma$.
\end{corollary}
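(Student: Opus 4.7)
The plan is to prove the three assertions in the order: (i) $\ttheta_i(V) \subseteq V$, (ii) $\tpi_i(V) \subseteq V$, (iii) $\nonSymE^\sigma_\gamma(\xvec;q,t) \in V$ for every $\sigma$. Assertion (ii) is then immediate from (i) via the identity $\tpi_i = \ttheta_i + (1-t)$ from \eqref{eq:tpi-in-ttheta}. Assertion (iii) follows from (i) and \cref{cor:twinv}: writing $\sigma = \omega_0 \tau$, we get
\[
\nonSymE^{\sigma}_\gamma(\xvec;q,t) = t^{-\twinv_\theta(\omega_0\gamma,\tau)} \ttheta_\tau \nonSymE^{\omega_0}_\gamma(\xvec;q,t),
\]
which lies in $V$ since $\ttheta_\tau$ is a composition of generators $\ttheta_i$ and (i) tells us each such generator preserves $V$.

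To establish (i), fix $\gamma \sim \alpha$ and $i \in \{1,\dots,n-1\}$. The basement $\omega_0$ places $i+1$ in row $j := n-i$ and $i$ in row $j+1 = n-i+1$, so Propositions \ref{prop:thetaShapeTrans} and \ref{prop:thetaShapeTrans2} apply with this $j$. I split according to the relative order of $\gamma_j$ and $\gamma_{j+1}$. If $\gamma_j < \gamma_{j+1}$, then \cref{prop:thetaShapeTrans} gives $\nonSymE^{\omega_0}_{s_j\gamma}(\xvec;q,t) = (\ttheta_i + C_u)\nonSymE^{\omega_0}_{\gamma}(\xvec;q,t)$, so
\[
\ttheta_i \nonSymE^{\omega_0}_{\gamma}(\xvec;q,t) = \nonSymE^{\omega_0}_{s_j \gamma}(\xvec;q,t) - C_u \nonSymE^{\omega_0}_{\gamma}(\xvec;q,t) \in V,
\]
as $s_j\gamma \sim \alpha$. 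If $\gamma_j > \gamma_{j+1}$, \cref{prop:thetaShapeTrans2} lets one solve for $\ttheta_i \nonSymE^{\omega_0}_\gamma(\xvec;q,t)$ as a $\setQ(q,t)$-linear combination of $\nonSymE^{\omega_0}_\gamma$ and $\nonSymE^{\omega_0}_{s_j\gamma}$, again in $V$. Finally, if $\gamma_j = \gamma_{j+1}$, \cref{thm:partialSymmetry} implies that $\nonSymE^{\omega_0}_{\gamma}(\xvec;q,t)$ is symmetric in $x_i, x_{i+1}$, whence \cref{lem:operatorSymmetry} gives $\ttheta_i \nonSymE^{\omega_0}_{\gamma}(\xvec;q,t) = t\, \nonSymE^{\omega_0}_{\gamma}(\xvec;q,t) \in V$.

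There is no real obstacle, since the three cases above exhaust all possibilities and each directly invokes a result already proved in this section; the work is essentially bookkeeping to confirm that the hypotheses of \cref{prop:thetaShapeTrans}, \cref{prop:thetaShapeTrans2} and \cref{thm:partialSymmetry} match the configuration produced by the basement $\omega_0$. The mildly delicate point is the equal-rows case, which depends on the nontrivial partial symmetry theorem; without it, the argument would fail precisely where neither shape-permuting identity applies. Once (i) is in hand, the deductions of (ii) and (iii) are mechanical applications of \eqref{eq:tpi-in-ttheta} and \cref{cor:twinv} respectively.
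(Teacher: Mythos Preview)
Your proof is correct and follows essentially the same approach as the paper: a three-case analysis on the relative lengths $\gamma_j,\gamma_{j+1}$ using \cref{prop:thetaShapeTrans}, \cref{prop:thetaShapeTrans2}, and \cref{thm:partialSymmetry} to show $\ttheta_i$ preserves $V$, then deducing the $\tpi_i$ statement from \eqref{eq:tpi-in-ttheta} and the general-basement statement from the basement-permuting operators. The only cosmetic difference is that the paper invokes \cref{prop:basementPermutation} directly for the last part, while you invoke its repackaging \cref{cor:twinv}; these are equivalent.
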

\begin{proof}
It is straightforward to show that $\ttheta_i \nonSymE^{\omega_0}_{\gamma}(\xvec; q, t) \in V$:
whenever the rows with basement label $i$ and $i+1$ have different lengths the statement follows from \cref{prop:thetaShapeTrans}
or \cref{prop:thetaShapeTrans2}. In the case of equal lengths, it follows from \cref{thm:partialSymmetry}, since then
$\ttheta_i \nonSymE^{\omega_0}_{\gamma}(\xvec; q, t) = t \nonSymE^{\omega_0}_{\gamma}(\xvec; q, t)$.
This implies that the $\ttheta_i$ preserves $V$.
That $\tpi_i\nonSymE^{\omega_0}_{\gamma}(\xvec; q,t) \in V$ now follows from expressing
$\tpi_i$ in terms of $\ttheta_i$ as in \cref{eq:tpi-in-ttheta}.
Finally, the last statement is a consequence of \cref{prop:basementPermutation}
using the fact that the basement-permuting operators preserve $V$.
\end{proof}

\section{Properties of permuted basement $t$-atoms}\label{sec:tAtoms}

We define the Demazure $t$-atoms as $\atom_{\alpha}(\xvec;  t) = \nonSymE^{\id}_{\alpha}(\xvec;0,t)$
and the permuted-basement Demazure $t$-atoms as $\atom^{\sigma}_{\alpha}(\xvec;  t) = \nonSymE^{\sigma}_{\alpha}(\xvec;0,t)$.
Similarly, the $t$-key polynomials are defined as $\key_{\alpha}(\xvec;  t) = \nonSymE^{\omega_0}_{\alpha}(\xvec;0,t)$.
The $t$-atoms was previously introduced in \cite{Haglund2011463} and they have remarkable similarities with
Hall--Littlewood polynomials.

\begin{remark}
Note, the permuted-basement Demazure atoms we obtain from $\atom^{\sigma}_{\alpha}(\xvec; 0)$
\emph{do not} agree in general with the extension of Demazure atoms introduced in \cite{Haglund2012,Remmel2013}.
They impose an extra condition (called the $B$-increasing condition) on the
underlying fillings which they call \defin{permuted basement fillings (PBF)}.
One underlying reason for imposing this extra condition is to be able to do an analogue of RSK
on these fillings.
\end{remark}

\begin{lemma}
Suppose $\alpha_j < \alpha_{j+1}$.
If $\sigma_j = i+1$ and $\sigma_{j+1} =i$ for some $i$, $j$, then
\begin{equation}\label{eq:tpiOnGentAtom1}
\tpi_i \atom^\sigma_{\alpha}(\xvec; t) = \atom^\sigma_{s_i \alpha}(\xvec; t).
\end{equation}
Similarly, if $\sigma_j = i$ and $\sigma_{j+1} =i+1$ for some $i$, $j$, then
\begin{equation}\label{eq:tpiOnGentAtom2}
\tpi_i \atom^\sigma_{\alpha}(\xvec; t) = t \atom^\sigma_{s_i \alpha}(\xvec; t).
\end{equation}

\end{lemma}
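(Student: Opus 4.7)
For~\eqref{eq:tpiOnGentAtom1}, the plan is to simply specialize \cref{prop:thetaShapeTrans} at $q=0$. Since $1+\leg u\geq 1$, the factor $q^{1+\leg u}$ in the denominator of $C_u$ vanishes, so $C_u$ collapses to $1-t$. \cref{prop:thetaShapeTrans} then reads
\[
\atom^\sigma_{s_i\alpha}(\xvec;t)=\bigl(\ttheta_i+(1-t)\bigr)\atom^\sigma_\alpha(\xvec;t),
\]
and by~\eqref{eq:tpi-in-ttheta} the operator $\ttheta_i+(1-t)\cdot\id$ is precisely $\tpi_i$, which gives \eqref{eq:tpiOnGentAtom1}.

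For~\eqref{eq:tpiOnGentAtom2}, the configuration $\sigma_j=i$, $\sigma_{j+1}=i+1$ does not match the hypothesis of \cref{prop:thetaShapeTrans}, so the plan is to reduce to the previous case by first swapping the basement labels $i$ and $i+1$ via \cref{prop:basementPermutation}. Let $\sigma'$ denote $\sigma$ with the basement labels at positions $j$ and $j+1$ interchanged, so $\sigma'_j=i+1$ and $\sigma'_{j+1}=i$; note that $\length(\sigma')>\length(\sigma)$. \cref{prop:basementPermutation} provides two identities: applied to $\atom^\sigma_\alpha$, where $\gamma_i=\alpha_j<\alpha_{j+1}=\gamma_{i+1}$, it yields $\tpi_i\atom^\sigma_\alpha=t\cdot\atom^{\sigma'}_\alpha$; applied to $\atom^\sigma_{s_i\alpha}$, where the $\gamma$-inequality flips, it yields $\tpi_i\atom^\sigma_{s_i\alpha}=\atom^{\sigma'}_{s_i\alpha}$.

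Now \eqref{eq:tpiOnGentAtom1} applied to the pair $(\sigma',\alpha)$, whose hypothesis is satisfied, produces $\atom^{\sigma'}_{s_i\alpha}=\tpi_i\,\atom^{\sigma'}_\alpha$. Combining this with the second basement-permutation identity above gives $\tpi_i\atom^\sigma_{s_i\alpha}=\tpi_i\atom^{\sigma'}_\alpha$, and invertibility of $\tpi_i$ (with inverse $\ttheta_i/t$, since $\ttheta_i\tpi_i=t$) forces $\atom^\sigma_{s_i\alpha}=\atom^{\sigma'}_\alpha$. Substituting back into the first identity produces $\tpi_i\atom^\sigma_\alpha=t\cdot\atom^\sigma_{s_i\alpha}$, which is \eqref{eq:tpiOnGentAtom2}.

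The main subtlety lies in bookkeeping the two factors in \cref{prop:basementPermutation}: the $\gamma$-comparison reverses when $\alpha$ is replaced by $s_i\alpha$, so the factor switches between $t$ and $1$ in the two applications, and the entire argument depends on this asymmetry producing the single overall $t$. A pleasant by-product is the \emph{a~priori} non-obvious intermediate equality $\atom^\sigma_{s_i\alpha}=\atom^{\sigma'}_\alpha$, a partial-symmetry statement for permuted-basement $t$-atoms reminiscent of---but not covered by---\cref{thm:partialSymmetry}.
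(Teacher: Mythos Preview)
Your proof is correct and follows essentially the same strategy as the paper. For \eqref{eq:tpiOnGentAtom1} both you and the paper specialize \cref{prop:thetaShapeTrans} at $q=0$ and invoke \eqref{eq:tpi-in-ttheta}; for \eqref{eq:tpiOnGentAtom2} both reduce to \eqref{eq:tpiOnGentAtom1} via \cref{prop:basementPermutation}, the only cosmetic difference being that the paper applies $\ttheta_i$ to both sides of \eqref{eq:tpiOnGentAtom1} and uses the commutation $\ttheta_i\tpi_i=\tpi_i\ttheta_i$ together with the $\ttheta_i$-version \eqref{eq:atomOperatorOnBasement}, whereas you use the $\tpi_i$-version \eqref{eq:keyOperatorOnBasement} twice and then cancel $\tpi_i$ --- the two routes are interchangeable.
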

\begin{proof}
To obtain \eqref{eq:tpiOnGentAtom1}, put $q=0$ in \cref{prop:thetaShapeTrans} and use the fact that $\ttheta_i + (1-t) = \tpi_i$.
The second equation is a consequence of the first as follows.
Start with $\sigma_j = i+1$ and $\sigma_{j+1} =i$  and apply $\ttheta_i$ on both sides of \eqref{eq:tpiOnGentAtom1}:
\begin{align*}
\ttheta_i \tpi_i \atom^\sigma_{\alpha}(\xvec; t) &= \ttheta_i \atom^\sigma_{s_i \alpha}(\xvec; t)
\end{align*}
The right hand side is rewritten using \eqref{eq:atomOperatorOnBasement}.
In the left hand side we use the same identity after using the fact that $\tpi_i$ and $\ttheta_i$ commutes:
\begin{align*}
\tpi_i \atom^{\sigma  s_i}_{\alpha}(\xvec; t) &= t \atom^{\sigma s_i}_{s_i \alpha}(\xvec; t).
\end{align*}
This now implies \eqref{eq:tpiOnGentAtom2}.
\end{proof}

\begin{corollary}\label{cor:tAtomKeyShapePermutingOperators}
The operators $\ttheta_i$ and $\tpi_i$ act on the shape of the $t$-atom and $t$-key in the following way:
\begin{align}
\ttheta_i \atom_{\alpha}(\xvec;  t) &= \atom_{s_i\alpha}(\xvec; t) \text{ if } \alpha_i > \alpha_{i+1}, \label{eq:atomShapePerm} \\
\tpi_i \key_{\alpha}(\xvec;  t) &= \key_{s_i\alpha}(\xvec; t) \text{ if } \alpha_i < \alpha_{i+1}.\label{eq:keyShapePerm}
\end{align}
\end{corollary}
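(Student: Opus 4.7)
The plan is to derive both identities directly from the preceding lemma, combined with the operator identity $\ttheta_i \tpi_i = t$ noted just after \eqref{eq:tpi-in-ttheta}. The key observation is that the two specializations $\sigma = \id$ and $\sigma = \omega_0$ reduce the lemma to statements about $\atom_\alpha$ and $\key_\alpha$ respectively.

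For identity \eqref{eq:atomShapePerm}, I would take $\sigma = \id$ in equation \eqref{eq:tpiOnGentAtom2}. Since $(\id)_j = j$, the hypotheses $\sigma_j = i$ and $\sigma_{j+1} = i+1$ are automatic precisely when $j = i$, and the condition $\alpha_j < \alpha_{j+1}$ becomes $\alpha_i < \alpha_{i+1}$. Thus \eqref{eq:tpiOnGentAtom2} reads
\[
\tpi_i \atom_\alpha(\xvec;t) \;=\; t\,\atom_{s_i \alpha}(\xvec;t) \quad \text{whenever } \alpha_i < \alpha_{i+1}.
\]
Applying $\ttheta_i$ to both sides and using $\ttheta_i \tpi_i = t$ gives $t\,\atom_\alpha = t\,\ttheta_i \atom_{s_i \alpha}$, which after cancelling $t$ yields $\atom_\alpha = \ttheta_i \atom_{s_i \alpha}$. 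Setting $\beta = s_i \alpha$ so that $\beta_i > \beta_{i+1}$ iff $\alpha_i < \alpha_{i+1}$, one reads off \eqref{eq:atomShapePerm}.

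For identity \eqref{eq:keyShapePerm}, I would apply \eqref{eq:tpiOnGentAtom1} with $\sigma = \omega_0$. Here $(\omega_0)_j = n+1-j$, so the hypotheses $\sigma_j = i+1$ and $\sigma_{j+1} = i$ single out the unique row position where basement labels $i+1$ and $i$ sit adjacent. The strict inequality $\alpha_j < \alpha_{j+1}$ is then precisely the condition $\alpha_i < \alpha_{i+1}$ appearing in the corollary (under the convention adopted in the lemma that $s_i\alpha$ and the inequality refer to the entries of $\alpha$ indexed by basement labels $i$ and $i+1$). The lemma's conclusion therefore reads $\tpi_i \key_\alpha = \key_{s_i \alpha}$, which is exactly \eqref{eq:keyShapePerm}.

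The main obstacle is purely bookkeeping: the lemma uses two indexings in tandem (the basement value $i$ that labels the Demazure--Lusztig operator, and the row position $j$), and one must verify that these line up compatibly with the statement of the corollary when $\sigma$ is specialized. Once that is done, each of the two identities follows in essentially one line from the lemma, with the atom case requiring the additional step of pre-composing with $\ttheta_i$ to convert the $\tpi_i$-statement into a $\ttheta_i$-statement via $\ttheta_i \tpi_i = t$.
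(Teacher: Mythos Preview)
Your proposal is correct and essentially identical to the paper's argument: the paper derives \eqref{eq:atomShapePerm} by applying $\ttheta_i$ to \eqref{eq:tpiOnGentAtom2} (with $\sigma=\id$ implicit), invoking $\ttheta_i\tpi_i=t$, and relabeling $\alpha\mapsto s_i\alpha$; and it derives \eqref{eq:keyShapePerm} simply as \eqref{eq:tpiOnGentAtom1} specialized to $\sigma=\omega_0$. Your only addition is making the specialization $\sigma=\id$ explicit and flagging the $i$-versus-$j$ indexing bookkeeping, which the paper leaves tacit.
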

\begin{proof}
The first statement is a direct consequence of applying $\ttheta_i$ on both sides of \eqref{eq:tpiOnGentAtom2} followed by
using $\theta_i \tpi_i = t$ and substituting $\alpha$ with $s_i \alpha$.
The second statement is \eqref{eq:tpiOnGentAtom1} with $\sigma=\omega_0$.
\end{proof}

% pi = {3, 1, 4, 2};
% AtomTPolynomial[{0, 1, 2, 4}[[pi]], {1, 2, 3, 4}, x, t]
% AtomTPolynomial[{0, 1, 2, 4}, Ordering@Reverse[pi], x, t]

The following two identities generalize a result which appears in \cite[Proposition 6.1]{Mason2009}.
The conclusion is that any $t$-key and $t$-atom polynomial can be obtained from a permuted-basement
$t$-atom with a \emph{partition} or \emph{reverse partition} shape, respectively.
\begin{proposition}\label{prop:tKeyAsPBF}
Let $\sigma$ be a fixed permutation, let $\lambda$ be a partition and let $\bar{\mu}$ denote the reverse of a partition $\mu$.
Then
\[
\key_{\sigma\lambda}(\xvec;t) = \atom^{\omega_0\sigma}_{\lambda}(\xvec;t)
\qquad \text{ and } \qquad
\atom_{\sigma\bar{\mu}}(\xvec;t) = \atom^\sigma_{\bar{\mu}}(\xvec;t),
\]
where $\sigma$ is the \emph{shortest permutation} taking $\lambda$ to $\sigma \lambda$,
and $\bar{\mu}$ to $\sigma \bar{\mu}$, respectively.
\end{proposition}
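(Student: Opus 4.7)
The plan is to establish both identities by a parallel induction on $\length(\sigma)$, in each case computing a distinguished operator action on a base polynomial in two different ways and equating the resulting expressions.

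For the second identity $\atom_{\sigma\bar\mu} = \atom^\sigma_{\bar\mu}$, I would consider $\tpi_\sigma$ applied to $\atom_{\bar\mu} = \atom^{\id}_{\bar\mu}$. On the ``basement side'' we get directly from \cref{cor:twinv} at $q=0$ that
\[
\tpi_\sigma \atom_{\bar\mu}(\xvec;t) = t^{\twinv_\pi(\bar\mu,\sigma)}\, \atom^\sigma_{\bar\mu}(\xvec;t).
\]
On the ``shape side'' the plan is to fix a reduced expression $\sigma = s_{i_1}\cdots s_{i_\ell}$ that is adapted to $\bar\mu$, in the sense that when it is applied right-to-left to $\bar\mu$ every intermediate composition $\gamma$ satisfies $\gamma_{i_r}\leq \gamma_{i_r+1}$ at the swap position. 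Such a reduced word exists precisely because $\sigma$ is the shortest permutation taking $\bar\mu$ to $\sigma\bar\mu$. At each step we apply \eqref{eq:tpiOnGentAtom2} when the inequality is strict (picking up a factor $t$), and appeal to \cref{thm:partialSymmetry} together with \cref{lem:operatorSymmetry} when the entries are equal (so $\tpi_{i_r}$ acts as the identity and contributes no factor). Iterating yields $\tpi_\sigma\atom_{\bar\mu} = t^{E}\atom_{\sigma\bar\mu}$, where $E$ counts the strict-ascent steps. Equating the two expressions, the identity reduces to the combinatorial fact $E = \twinv_\pi(\bar\mu,\sigma)$, which follows by matching each strict-ascent step in the reduced word with the unique pair $(i,j)$ counted by $\twinv_\pi$ that is ``crossed'' at that step.

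For the first identity $\key_{\sigma\lambda} = \atom^{\omega_0\sigma}_\lambda$, I would run the analogous argument with $\ttheta_\sigma$ acting on $\key_\lambda = \atom^{\omega_0}_\lambda$. The basement side again follows from \cref{cor:twinv}, giving $\ttheta_\sigma\key_\lambda = t^{\twinv_\theta(\omega_0\lambda,\sigma)}\atom^{\omega_0\sigma}_\lambda$. For the shape side the cleanest path is to pass through $\key_{\bar\lambda}$: iterating the clean identity $\tpi_i\key_\gamma = \key_{s_i\gamma}$ from \cref{cor:tAtomKeyShapePermutingOperators} along a reduced word for $\sigma\omega_0$ compatible with $\bar\lambda$ produces $\key_{\sigma\lambda} = \tpi_{\sigma\omega_0}\key_{\bar\lambda}$, and $\key_\lambda = \tpi_{\omega_0}\key_{\bar\lambda}$ in the same way. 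Using the mixed braid relations of \cref{lem:braidRelations} together with $\tpi_i = \ttheta_i + (1-t)$ from \eqref{eq:tpi-in-ttheta}, one can rewrite $\ttheta_\sigma\tpi_{\omega_0}$ in a form that, when applied to $\key_{\bar\lambda}$, simplifies (via the partial symmetries of $\key_{\bar\lambda}$ in each $x_ix_{i+1}$ corresponding to equal parts of $\bar\lambda$) to $t^{\twinv_\theta(\omega_0\lambda,\sigma)}\tpi_{\sigma\omega_0}\key_{\bar\lambda} = t^{\twinv_\theta(\omega_0\lambda,\sigma)}\key_{\sigma\lambda}$.

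The main obstacle in both cases is the exponent count: one has to check that the $t$-power accumulated by the iterated shape identity agrees exactly with $\twinv_\pi(\bar\mu,\sigma)$ or $\twinv_\theta(\omega_0\lambda,\sigma)$ produced by \cref{cor:twinv}. For the first identity there is the additional technicality that the shape-permuting identity for $\key$ naturally lives on the $\tpi$ side (via $\sigma\omega_0$ rather than $\sigma$), so the bridge between $\ttheta_\sigma$ and $\tpi_{\sigma\omega_0}$ using the braid relations and partial symmetry has to be set up carefully. Once the $t$-exponents are shown to match, both identities fall out by dividing through.
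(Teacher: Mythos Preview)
Your plan for the second identity is essentially the paper's argument, phrased globally rather than step by step. The paper simply inducts on $\length(\sigma)$: assuming $\atom_{\tau\bar\mu}=\atom^{\tau}_{\bar\mu}$, apply a single $\tpi_i$ to both sides, using \eqref{eq:tpiOnGentAtom2} on the left (shape side) and \eqref{eq:keyOperatorOnBasement} on the right (basement side). The point is that the two $t$-factors agree \emph{at every step}, so there is no need to accumulate them and match totals against $\twinv_\pi$ at the end. Your appeal to equal-part steps via \cref{thm:partialSymmetry} is also unnecessary: the hypothesis that $\sigma$ is the \emph{shortest} permutation taking $\bar\mu$ to $\sigma\bar\mu$ forces every swap in a compatible reduced word to exchange strictly different parts, so the equal case never occurs. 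This is exactly the observation the paper records in its last sentence.

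For the first identity your detour through $\key_{\bar\lambda}$ and the mixed braid relations is an unnecessary complication, and the step ``rewrite $\ttheta_\sigma\tpi_{\omega_0}$ so that it simplifies on $\key_{\bar\lambda}$ to $t^{\twinv_\theta}\tpi_{\sigma\omega_0}$'' is not justified by \cref{lem:braidRelations} alone: those are local three-term moves, and you have not argued that they suffice to perform this global rewriting. The paper avoids all of this by running the \emph{same} step-by-step induction as for the second identity: apply $\tpi_i$ to both sides of $\key_{\tau\lambda}=\atom^{\omega_0\tau}_{\lambda}$, using \eqref{eq:tpiOnGentAtom1} (equivalently \eqref{eq:keyShapePerm}) on the left to permute the shape without any $t$-factor, and \eqref{eq:keyOperatorOnBasement} on the right to permute the basement. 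Again the shortest-permutation hypothesis guarantees strict inequality at each step, so the two sides pick up identical factors and the induction goes through directly. No passage through $\bar\lambda$, no mixed braid relations, and no global exponent bookkeeping are needed.
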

\begin{proof}
This follows from \cref{cor:twinv} and \cref{prop:basementPermutation} using induction over the length of $\sigma$.
We note that the identities are clearly true when $\sigma = \id$.
The first equation is now proved as follows:
We apply $\tpi_i$ on both sides, where identity \eqref{eq:tpiOnGentAtom1}
is used on the left hand side and \eqref{eq:keyOperatorOnBasement} is used on the right hand side.
A similar reasoning proves the second identity.

The condition on $\sigma$ being the shortest permutation ensures that only parts
of $\alpha$ with \emph{different} lengths are interchanged.
\end{proof}

Finally, we note that \cref{cor:tAtomKeyShapePermutingOperators} with $t=0$ implies (under the same conditions as in \cref{prop:tKeyAsPBF}) that
$\pi_\sigma \xvec^\lambda = \key_{\sigma \bar{\lambda}}(\xvec)$ and $\theta_\sigma \xvec^\mu = \atom_{\sigma \mu}(\xvec)$.
This is the standard definition of key polynomials and the
Demazure atoms, see \cite{Lascoux1990Keys,ReinerShimozono1995, Mason2009}.

\medskip

We are now ready to to prove the following proposition:
\begin{proposition}
Given $\alpha$ and $\sigma$, there is a sequence $\trho_{i_1}\cdots \trho_{i_\ell}$ such that
\begin{equation}
\atom_\alpha^\sigma(\xvec;t) = \trho_{i_1}\cdots \trho_{i_\ell} \xvec^\lambda
\end{equation}
where $\lambda$ is the partition with the parts of $\alpha$ in decreasing order
and each $\trho_{i_j}$ is one of $\ttheta_i$ or $\tpi_i$.
\end{proposition}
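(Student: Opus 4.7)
The plan is to argue by induction, starting from the base identification $\xvec^\lambda = \atom^\id_\lambda(\xvec;t)$ and progressively upgrading to $\atom^\sigma_\alpha(\xvec;t)$ using the basement-permuting operators of \cref{prop:basementPermutation} together with the shape-permuting operators obtained from \cref{prop:thetaShapeTrans} at $q=0$. The base case is a direct combinatorial check: at $q=0$ only non-attacking fillings with $\maj F=0$ contribute, and for shape $\lambda$ with basement $\id$ the non-attacking and descent-free conditions together force the unique filling in which row $i$ is identically labelled by $i$, contributing weight $\xvec^\lambda$ with no nontrivial multiplicative factors.

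From this base case I would first invoke \cref{cor:tAtomKeyShapePermutingOperators}, specifically \eqref{eq:atomShapePerm}, to shape-permute at basement $\id$: $\ttheta_i \atom^\id_{\alpha'} = \atom^\id_{s_i \alpha'}$ whenever $\alpha'_i > \alpha'_{i+1}$. Starting from the decreasing composition $\lambda$ one can choose a reduced word for the permutation $w$ with $w\lambda = \alpha$ so that each intermediate composition has the required strict descent at the swap position, obtaining $\atom^\id_\alpha(\xvec;t)$ scalar-free as a product of $\ttheta$-operators applied to $\xvec^\lambda$.

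The remaining task is to install the target basement $\sigma$ on top of $\atom^\id_\alpha$. By \cref{cor:twinv}, applying $\tpi_\sigma$ (for any reduced word of $\sigma$) gives $\tpi_\sigma \atom^\id_\alpha = t^{\twinv_\pi(\alpha,\sigma)} \atom^\sigma_\alpha$, which is scalar-free exactly when $\twinv_\pi(\alpha,\sigma)=0$; this holds automatically in the special case $\alpha=\lambda$ (since $\lambda$ is weakly decreasing), giving $\tpi_\sigma \xvec^\lambda = \atom^\sigma_\lambda$ immediately. For general $\alpha$, however, the statistic need not vanish, and the naive two-stage construction can pick up stray factors of $t$. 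The main obstacle is precisely this scalar bookkeeping: one must interleave the basement-permuting and shape-permuting steps and choose between $\ttheta_i$ and $\tpi_i$ at each stage so that the combinatorial hypotheses of \eqref{eq:tpiOnGentAtom1} and the scalar-free case of \cref{prop:basementPermutation} are met; when an unavoidable scalar $t$ does appear (say from \eqref{eq:tpiOnGentAtom2}), it should be compensated by inserting a $\tpi_m\ttheta_m$ pair elsewhere, using $\tpi_m\ttheta_m=t$. Verifying that such a coordinated choice is always available --- equivalently, that every $\atom^\sigma_\alpha$ lies in the $\{\ttheta_i,\tpi_i\}$-monomial orbit of $\xvec^\lambda$ --- is the delicate core of the argument.
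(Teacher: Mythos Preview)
Your outline tracks the paper exactly up through the identity
\[
t^{\twinv_\pi(\alpha,\sigma)}\,\atom^\sigma_\alpha(\xvec;t)=\tpi_\sigma\,\ttheta_\tau\,\xvec^\lambda,
\]
obtained from the base case $\atom^{\id}_\lambda=\xvec^\lambda$, then \eqref{eq:atomShapePerm} to reach shape $\alpha$ with basement $\id$, then \cref{cor:twinv} to install $\sigma$. The divergence is in how to eliminate the power of $t$. Your proposed remedy, ``compensate by inserting a $\tpi_m\ttheta_m$ pair elsewhere'', goes the wrong way: since $\tpi_m\ttheta_m=t$ as an operator, inserting such a pair \emph{multiplies} by $t$ rather than dividing, and no interleaving of \eqref{eq:tpiOnGentAtom1}, \eqref{eq:tpiOnGentAtom2} and \cref{prop:basementPermutation} ever produces a $t^{-1}$. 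So this line, as written, cannot close the gap you yourself flag.

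The paper's missing idea is to \emph{shorten} the word rather than lengthen it. The operator word $\tpi_\sigma\ttheta_\tau$ is not reduced in the braid sense; the mixed braid relations of \cref{lem:braidRelations} (together with the ordinary ones) let one slide $\tpi$'s past $\ttheta$'s until a pair $\tpi_i\ttheta_i$ becomes adjacent and is replaced by the scalar $t$. Each such cancellation strips two letters and contributes one factor of $t$, giving an operator identity $\tpi_\sigma\ttheta_\tau=t^{k}\,\trho_{i_1}\cdots\trho_{i_\ell}$ for some $k$. Comparing with the displayed identity yields $\trho_{i_1}\cdots\trho_{i_\ell}\,\xvec^\lambda=t^{\twinv_\pi(\alpha,\sigma)-k}\atom^\sigma_\alpha(\xvec;t)$, and specialising $t=0$ (where the left side is a polynomial and $\atom^\sigma_\alpha(\xvec;0)$ is a nonzero Demazure atom) forces $k=\twinv_\pi(\alpha,\sigma)$, hence the scalar-free expression. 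In short: the ``coordinated choice'' you were looking for is not an interleaving at all, but a reduction of the already-written word $\tpi_\sigma\ttheta_\tau$ via \cref{lem:braidRelations}.
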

\begin{proof}
The case $\alpha=\lambda$ and $\sigma = \id$ is clear,
\begin{equation}\label{eq:opGenTAbase}
\atom_\lambda^{\id}(\xvec;t) = \xvec^\lambda 
\end{equation}
which follows from the triangularity property.
Using \eqref{eq:atomShapePerm} repeatedly on both sides of \eqref{eq:opGenTAbase}, we have that
\begin{equation}\label{eq:opGenTAbasement}
\atom_\alpha^{\id}(\xvec;t) = \ttheta_\tau \xvec^\lambda
\end{equation}
where $\tau$ is the shortest permutation such that $\alpha = \tau \lambda$.
Now apply a sequence of $\tpi_i$ on both sides in order to transform the basement into $\sigma$ while fixing the shape,
using the second identity in \cref{prop:basementPermutation}.
However, this will in general introduce a power of $t$, corresponding to how many times
we interchange basement labels of rows where the top row is shorter then the bottom row.
Using \cref{cor:twinv}, we obtain
\begin{equation}
t^{\twinv_\pi(\alpha,\sigma)} \atom_\alpha^{\sigma}(\xvec;t) = \tpi_\sigma \ttheta_\tau \xvec^\lambda.
\end{equation}
Note now that the word $\tpi_\sigma \ttheta_\tau$ is \emph{not reduced},
meaning that we can use the non-mixed brad relations as well as the mixed braid relations in \cref{lem:braidRelations}
together with the cancellation $\tpi_i \ttheta_i = t$.

Since the left hand side is a multiple of $t^{\twinv_\pi(\alpha,\sigma)}$,
we must have at least this number of canellations in the right hand side.
On the other hand, after these cancellations we have
\begin{equation}
\atom_\alpha^{\sigma}(\xvec;t) = t^{-\twinv_\pi(\alpha,\sigma)} \tpi_\sigma \ttheta_\tau \xvec^\lambda,
\end{equation}
where the left hand side is a non-zero polynomial when $t=0$.
Therefore, the number of cancellations must be equal to $t^{\twinv_\pi(\alpha,\sigma)}$,
giving the desired form.
\end{proof}

\section{Polynomial expansions}

As before, let $\gamma \sim \mu$ indicate that the parts of $\gamma$ is a permutation of the parts of $\mu$,
where $\gamma$ and $\mu$ are compositions with the same number of parts.

\begin{theorem}\label{thm:macdonaldPinGenMacE}
The symmetric Macdonald polynomials $\macdonaldP_\lambda(\xvec;q,t)$, indexed by partitions $\lambda$,
expands in the permuted basement Macdonald polynomials as
\begin{equation}\label{eq:macPinGenAtoms}
\macdonaldP_\lambda(\xvec;q,t) = \prod_{u \in \lambda} \left(1- q^{1+\leg u}t^{\arm u}\right) \cdot
\sum_{\gamma \sim \lambda} \frac{ t^{ \twinv_{\pi}(\gamma,\sigma) } \nonSymE^{\sigma}_{\gamma}(\xvec;q,t)}{
\prod_{v \in \gamma} \left(1- q^{1+\leg v}t^{\arm v}\right)
}.
\end{equation}
\end{theorem}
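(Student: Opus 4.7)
The plan is to rewrite the right-hand side in operator form, verify the claim at $\sigma=\omega_0$ as a base case, and then extend to arbitrary $\sigma$ by a decreasing induction on $\length(\omega_0)-\length(\sigma)$. Set $C_\lambda = \prod_{u\in\lambda}(1-q^{1+\leg u}t^{\arm u})$ and let $R_\sigma$ denote the inner sum, so that the claim reads $C_\lambda R_\sigma = \macdonaldP_\lambda$. Using \cref{cor:twinv} to rewrite each summand as $t^{\twinv_\pi(\gamma,\sigma)}\nonSymE^\sigma_\gamma = \tpi_\sigma \nonSymE^{\id}_\gamma$, and pulling the $\xvec$-linear operator $\tpi_\sigma$ outside the (scalar) denominators, yields the key identity $R_\sigma = \tpi_\sigma R_{\id}$.

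For $\sigma = \omega_0$ the two conditions $i<j$ and $(\omega_0)_i < (\omega_0)_j$ are incompatible, so $\twinv_\pi(\gamma,\omega_0) = 0$ for every $\gamma$, and the claim collapses to the classical expansion
\[
\macdonaldP_\lambda(\xvec;q,t) = C_\lambda \sum_{\gamma\sim\lambda}\frac{\nonSymE^{\omega_0}_\gamma(\xvec;q,t)}{\prod_{v\in\gamma}(1-q^{1+\leg v}t^{\arm v})},
\]
which is a standard result in the theory of nonsymmetric Macdonald polynomials, proved in \cite{HaglundNonSymmetricMacdonald2008}. In particular, $R_{\omega_0} = \macdonaldP_\lambda/C_\lambda$ is symmetric in every adjacent pair $x_i x_{i+1}$.

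For the inductive step, suppose $R_\tau = R_{\omega_0}$ has been established for every $\tau$ with $\length(\tau) > \length(\sigma)$, and that $\sigma \neq \omega_0$. Choose $i$ with $\sigma_i < \sigma_{i+1}$, so that $\length(\sigma s_i) = \length(\sigma) + 1$; then $\tpi_i \tpi_\sigma = \tpi_{\sigma s_i}$ as operators, hence
\[
\tpi_i R_\sigma = \tpi_i\tpi_\sigma R_{\id} = \tpi_{\sigma s_i} R_{\id} = R_{\sigma s_i} = R_{\omega_0}
\]
by the induction hypothesis. Apply $\ttheta_i$ to both sides: since $R_{\omega_0}$ is symmetric, \cref{lem:operatorSymmetry} gives $\ttheta_i R_{\omega_0} = tR_{\omega_0}$, while the Hecke relation $\ttheta_i \tpi_i = t$ gives $\ttheta_i \tpi_i R_\sigma = t R_\sigma$. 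Equating yields $R_\sigma = R_{\omega_0}$, completing the induction and hence the theorem.

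The main nontrivial input is the base case at $\sigma = \omega_0$, which is the classical expansion of $\macdonaldP_\lambda$ in nonsymmetric Macdonald polynomials with basement $\omega_0$; once this is available, the remainder is a clean algebraic manipulation using the operator identity $R_\sigma = \tpi_\sigma R_{\id}$, the cancellation $\ttheta_i\tpi_i = t$, and the fact that the Demazure--Lusztig operators act as multiplication by $t$ on polynomials symmetric in $x_i x_{i+1}$.
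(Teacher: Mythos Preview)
Your argument is correct and rests on the same two ingredients as the paper's proof: the operator identity $t^{\twinv_\pi(\gamma,\sigma)}\nonSymE^\sigma_\gamma=\tpi_\sigma\nonSymE^{\id}_\gamma$ from \cref{cor:twinv}, and the fact that $\tpi_i$ (resp.\ $\ttheta_i$) acts as the identity (resp.\ multiplication by $t$) on polynomials symmetric in $x_ix_{i+1}$.

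The difference is in packaging. The paper takes the base case at $\sigma=\id$ (this is what \cite[Prop.~5.3.1]{HaglundNonSymmetricMacdonald2008} actually states, giving $R_{\id}=\macdonaldP_\lambda/C_\lambda$) and then applies $\tpi_\sigma$ once: the left side is fixed by \cref{lem:operatorSymmetry}, the right side becomes $R_\sigma$ by \cref{cor:twinv}, done. You instead anchor at $\sigma=\omega_0$ and run a descending induction. But your own identity $R_\sigma=\tpi_\sigma R_{\id}$ already makes the induction superfluous: once any single $R_\tau$ is known to equal the symmetric function $\macdonaldP_\lambda/C_\lambda$, the identity $R_\sigma=\tpi_\sigma R_{\id}$ together with $\tpi_i(\text{symmetric})=\text{symmetric}$ gives every $R_\sigma$ at once. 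Your inductive step is, unwound, exactly a step-by-step proof that $\tpi_i$ fixes $R_{\omega_0}$.

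One small bibliographic point: the expansion you cite as the $\omega_0$ base case is not literally Prop.~5.3.1 of \cite{HaglundNonSymmetricMacdonald2008}; that proposition gives the $\sigma=\id$ expansion (which is why the paper uses $\id$ as its starting point). The $\omega_0$ version is of course equivalent via standard symmetrization formulas, but if you keep your write-up you should either cite a source that states it in that form or switch the base case to $\sigma=\id$, which also shortens the proof to the paper's two lines.
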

\begin{proof}
In \cite[Prop. 5.3.1]{HaglundNonSymmetricMacdonald2008}, the following expansion is obtained:
\begin{equation}\label{eq:macPinAtoms}
\macdonaldP_\lambda(\xvec;q,t) = \prod_{u \in \lambda} \left(1- q^{1+\leg u}t^{\arm u}\right) \cdot
\sum_{\gamma \sim \lambda} \frac{  \nonSymE^{\id}_{\gamma}(\xvec;q,t)}{
\prod_{v \in \gamma} \left(1- q^{1+\leg v}t^{\arm v}\right)
}.
\end{equation}
We apply $\tpi_\sigma$ on both sides. The resulting expression in the right hand side follows from \cref{cor:twinv},
and \cref{lem:operatorSymmetry} implies that $\tpi_\sigma$ acts as the identity on the symmetric polynomial in the left hand side.
\end{proof}

As a corollary, we get the following positive expansion of Hall--Littlewood polynomials in
permuted-basement $t$-atoms, by letting $q=0$ in \eqref{eq:macPinGenAtoms}.
This extends a result in \cite{Haglund2011463}.
\begin{corollary}[Hall--Littlewood in permuted-basement $t$-atoms]\label{cor:HLinGenAtoms2}
The Hall--Littlewood polynomials $\hlPolyP_\lambda(\xvec;t)$ expands positively into permuted-basement $t$-atoms:
\begin{equation}\label{eq:HLinGenAtoms2}
 \hlPolyP_\lambda(\xvec;t) = \sum_{\gamma \sim \lambda} t^{ \twinv_\pi(\gamma,\sigma) } \atom^{\sigma}_{\gamma}(\xvec;t).
\end{equation}
\end{corollary}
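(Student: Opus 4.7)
The plan is to obtain \cref{cor:HLinGenAtoms2} by a direct specialization $q = 0$ in \cref{thm:macdonaldPinGenMacE}, so the real work has already been done. The identity to specialize is
\[
\macdonaldP_\lambda(\xvec;q,t) = \prod_{u \in \lambda} \left(1- q^{1+\leg u}t^{\arm u}\right) \cdot
\sum_{\gamma \sim \lambda} \frac{ t^{ \twinv_{\pi}(\gamma,\sigma) } \nonSymE^{\sigma}_{\gamma}(\xvec;q,t)}{
\prod_{v \in \gamma} \left(1- q^{1+\leg v}t^{\arm v}\right)}.
\]
I will check three things: that the left-hand side specializes to the Hall--Littlewood polynomial, that each summand on the right specializes to a permuted-basement $t$-atom, and that the rational prefactor is regular at $q=0$ and evaluates to $1$.

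First, the specialization $\macdonaldP_\lambda(\xvec;0,t) = \hlPolyP_\lambda(\xvec;t)$ is the classical definition of the Hall--Littlewood $P$-polynomial; this is standard and requires no additional argument. Second, by the definition given at the start of \cref{sec:tAtoms}, we have $\atom^{\sigma}_{\gamma}(\xvec;t) = \nonSymE^{\sigma}_{\gamma}(\xvec;0,t)$, which handles the permuted-basement factor of each summand.

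For the prefactor and denominator, note that for every box $u$ the exponent $1+\leg u$ is at least $1$, so each factor $1 - q^{1+\leg u} t^{\arm u}$ evaluates to $1$ at $q=0$. In particular the denominator is nonzero at $q = 0$, so the right-hand side is regular there and we may substitute directly. After substitution the product prefactor is $1$, each denominator is $1$, and we obtain
\[
\hlPolyP_\lambda(\xvec;t) = \sum_{\gamma \sim \lambda} t^{\twinv_{\pi}(\gamma,\sigma)} \atom^{\sigma}_{\gamma}(\xvec;t),
\]
as claimed. There is essentially no obstacle here beyond verifying that the rational function on the right has no pole at $q = 0$, which is immediate from $1 + \leg u \geq 1$; the positivity claim in the corollary is automatic since each summand is a nonnegative integer power of $t$ times a permuted-basement $t$-atom.
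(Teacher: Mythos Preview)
Your proof is correct and follows exactly the paper's approach: the corollary is obtained by setting $q=0$ in \cref{thm:macdonaldPinGenMacE}, and your additional verification that each factor $1 - q^{1+\leg u} t^{\arm u}$ specializes to $1$ (since $1+\leg u \geq 1$) makes the substitution rigorous.
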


\medskip

Recall the classical expansion of Schur polynomials in terms of Hall--Littlewood polynomials,
\[
\schurS_\lambda(\xvec) = \sum_{\mu \vdash |\lambda|} \kfK_{\lambda\mu}(t) \hlPolyP_\mu(\xvec;t)
\]
where $\kfK_{\lambda\mu}(t)$ are the Kostka--Foulkes polynomials.
These are known to be polynomials with non-negative integer coefficients and have
a combinatorial interpretation, see \cite{LascouxSchutzenberger78}.
\cref{cor:HLinGenAtoms2} implies the following positive expansion:
\begin{corollary}[Schur in permuted-basement $t$-atoms]\label{cor:schurInGeneralatoms}
If $\lambda$ is a partition, then
\begin{equation}\label{eq:schurInGeneralatoms}
\schurS_\lambda(\xvec) = \sum_{\gamma \vdash |\lambda|} t^{ \twinv_\pi(\gamma,\sigma) } \kfK_{\lambda\gamma}(t) \atom^\sigma_\gamma(\xvec;t),
\end{equation}
where the sum now is taken over compositions of $|\lambda|$ and $\kfK_{\lambda\gamma}(t) = \kfK_{\lambda\mu}(t)$ if $\gamma \sim \mu$.

\end{corollary}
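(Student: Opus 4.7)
The plan is to derive this expansion directly by substituting the permuted-basement $t$-atom expansion of Hall--Littlewood polynomials from \cref{cor:HLinGenAtoms2} into the classical Kostka--Foulkes expansion of Schur polynomials. No new combinatorial work is required; the statement is a formal manipulation of two already-established identities.

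First I would start from the classical identity
\begin{equation*}
\schurS_\lambda(\xvec) = \sum_{\mu \vdash |\lambda|} \kfK_{\lambda\mu}(t)\, \hlPolyP_\mu(\xvec;t),
\end{equation*}
recalled in the paragraph immediately preceding the corollary. Then, for each partition $\mu$ appearing in the sum, I would apply \cref{cor:HLinGenAtoms2} to rewrite
\begin{equation*}
\hlPolyP_\mu(\xvec;t) = \sum_{\gamma \sim \mu} t^{\twinv_\pi(\gamma,\sigma)} \atom^{\sigma}_{\gamma}(\xvec;t).
\end{equation*}
Substituting this into the Kostka--Foulkes expansion yields a double sum over pairs $(\mu,\gamma)$ with $\mu$ a partition of $|\lambda|$ and $\gamma$ a composition of $|\lambda|$ with $\gamma \sim \mu$.

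Next I would collapse the double sum into a single sum over compositions $\gamma$ of $|\lambda|$. Each composition $\gamma$ of $|\lambda|$ is a rearrangement of a unique partition $\mu = \revsort{\gamma}$, so the two indexings are in bijection: the pair $(\mu,\gamma)$ with $\gamma \sim \mu$ is the same data as the single composition $\gamma$. Using the convention $\kfK_{\lambda\gamma}(t) := \kfK_{\lambda\mu}(t)$ whenever $\gamma \sim \mu$ (as stated in the corollary), the sum rewrites as
\begin{equation*}
\schurS_\lambda(\xvec) = \sum_{\gamma \vdash |\lambda|} t^{\twinv_\pi(\gamma,\sigma)}\, \kfK_{\lambda\gamma}(t)\, \atom^{\sigma}_{\gamma}(\xvec;t),
\end{equation*}
which is exactly \eqref{eq:schurInGeneralatoms}.

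There is essentially no obstacle here: the entire proof is index-shuffling, and the only point requiring a word of comment is the convention that extends the Kostka--Foulkes polynomial to composition indices by symmetry under rearrangement, which the statement of the corollary already builds in. It is worth remarking that the choice of $\sigma$ is free, so the corollary actually produces a family of Schur expansions parametrized by $\sigma \in \symS_n$, recovering the $t=0$ positivity in Demazure atoms as a special case (with $\sigma = \id$) and, more generally, giving a uniform refinement of both the Hall--Littlewood expansion and the Demazure atom expansion of Schur polynomials.
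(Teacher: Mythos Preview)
Your proof is correct and matches the paper's approach exactly: the paper simply states that the corollary follows from \cref{cor:HLinGenAtoms2} together with the classical Kostka--Foulkes expansion of Schur polynomials into Hall--Littlewood polynomials, which is precisely the substitution-and-reindexing argument you have written out.
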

A combinatorial proof of this identity in the case $t=0$ appear in \cite{Mason2008} in the case $\sigma=\id$
and the case with a general $\sigma$ and $t=0$ will appear in \cite{Pun2016Thesis}.

\medskip

To give an overview over positive expansions of polynomials in other bases,
we present an overview in \cref{fig:positivity}.
The proofs of these expansions can be found in the references.
\begin{figure}[!ht]
\centering

\begin{tikzpicture}[xscale=3.6,yscale=2.3,scale=0.8, every node/.style={scale=0.8}]
\tikzset{
    vertex/.style = {
        draw,
	align=left,
% 	fill            = blue!50!white!30,
        outer sep = 2pt,
        inner sep = 3pt,
	minimum height = 1cm
    },
    pluses/.style={
	dashed, decoration={markings,
	mark=between positions 1.5pt and 1 step 6pt with {
	\draw[-] (0,1.5pt) -- (0,-1.5pt);
       }
    },
    postaction=decorate,
  },
  posExp/.style = {thick,->,black},
}
\node[vertex] (quasiSchur)	at ( 2.5, 1) {Quasisymmetric Schur $\mathcal{S}_\alpha(\xvec)$};
\node[vertex] (atom)		at ( 1, 0) {Demazure atom $\atom_\alpha(\xvec)$};
\node[vertex] (key)		at ( 1, 2) {Key $\key_\alpha(\xvec)$};
\node[vertex] (generalAtom)	at ( 1, 1) {General atom $\atom^\sigma_\alpha(\xvec)$};
\node[vertex] (generalTAtom)	at ( 4, 1) {General $t$-atom $\atom^\sigma_\alpha(\xvec;t)$};
\node[vertex] (elem)		at ( 2.5, 4) {Elementary $e_\lambda(\xvec)$};
\node[vertex] (schurS)		at ( 2.5, 3) {Schur $\schurS_\lambda(\xvec)$};
\node[vertex] (schubert)		at ( 1, 3) {Schubert, $\mathfrak{S}_w(\xvec)$};
\node[vertex] (hallLittlewoodP)	at ( 4, 2) {Hall--Littlewood, $\hlPolyP_\lambda(\xvec;t)$};
\node[vertex] (gesselFundamental)	at ( 2.5, 0) {Gessel fundamental, $\mathrm{L}_D(\xvec)$};
\draw[posExp]  (schubert) to (key);
\draw[posExp]  (schurS) to (key);
\draw[posExp,dashed]  (key) to (generalAtom);
\draw[posExp]  (quasiSchur) to (atom);
\draw[posExp]  (schurS) to (hallLittlewoodP);
\draw[posExp,dashed]  (generalAtom) to (atom);
\draw[posExp]  (hallLittlewoodP) to (generalTAtom);
\draw[posExp]  (schurS) to (quasiSchur);
\draw[posExp]  (elem) to (schurS);
\draw[posExp]  (quasiSchur) to (gesselFundamental);
\draw[posExp,bend left,looseness=2.2]  (key) to (atom);
\end{tikzpicture}

\caption{This graph shows various families of polynomials. The arrows indicate ``expands positively in'' which means that the coefficients are
polynomials with non-negative coefficients. The proofs of the dashed edges are to appear in \cite{Pun2016Thesis}.
}\label{fig:positivity}
\end{figure}
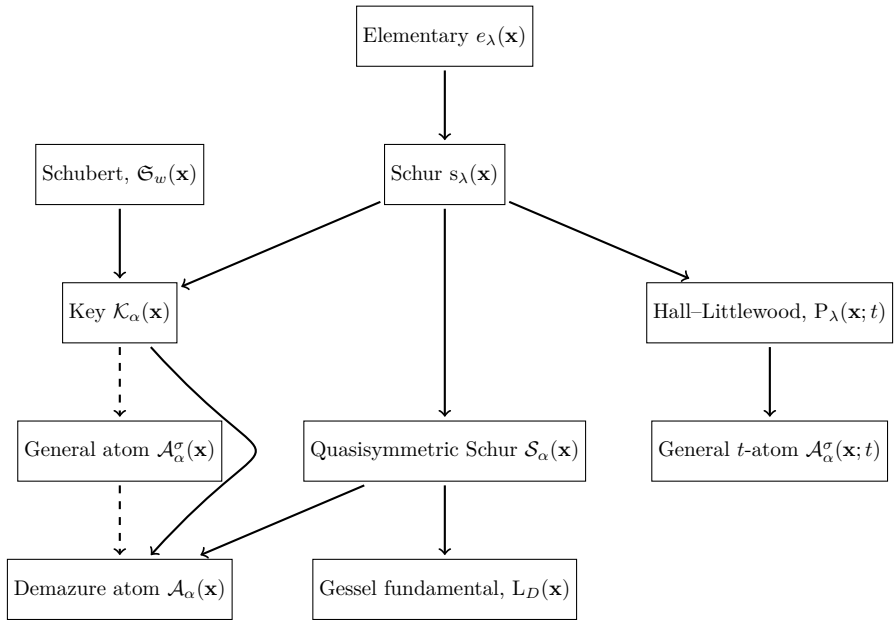

\subsection*{Acknowledgement}

The author would like to thank Jim Haglund for suggesting the topic and for providing insightful comments.
Many thanks goes to Anna Pun for interesting discussions regarding permuted-basement atoms.
This work has been funded by the \emph{Knut and Alice Wallenberg Foundation} (2013.03.07).

\section*{Appendix: Examples of permuted-basement Macdonald polynomials}

Here are some explicitly computed non-symmetric Macdonald polynomials with a permuted basement.

\begin{align}
\nonSymE_{110}^{321}(\xvec;q,t) &= \frac{(1-t)^2 x_1 x_2}{(1-q t) \left(1-q t^2\right)}+\frac{t (1-t) x_1 x_2}{1-qt^2}+\frac{(1-t) x_1 x_3}{1-q t}+x_2 x_3 \tag{$\heartsuit$}\\
\nonSymE_{110}^{312}(\xvec;q,t) &=\frac{q t (1-t)^2 x_1 x_2}{(1-q t) \left(1-q t^2\right)}+\frac{(1-t) x_1 x_2}{1-qt^2}+\frac{q (1-t) x_2 x_3}{1-q t}+x_1 x_3 \tag{$\diamondsuit$}\\
\nonSymE_{110}^{213}(\xvec;q,t) &=\frac{q (1-t)^2 x_1 x_3}{(1-q t) \left(1-q t^2\right)}+\frac{q t (1-t) x_1 x_3}{1-qt^2}+\frac{q (1-t) x_2 x_3}{1-q t}+x_1 x_2 \tag{$\spadesuit$}\\
\nonSymE_{110}^{231}(\xvec;q,t) &= \frac{q t (1-t)^2 x_1 x_3}{(1-q t) \left(1-q t^2\right)}+\frac{(1-t) x_1 x_3}{1-qt^2}+\frac{(1-t) x_1 x_2}{1-q t}+x_2 x_3 \tag{$\heartsuit$}\\
\nonSymE_{110}^{132}(\xvec;q,t) &=\frac{q (1-t)^2 x_2 x_3}{(1-q t) \left(1-q t^2\right)}+\frac{q t (1-t) x_2 x_3}{1-qt^2}+\frac{(1-t) x_1 x_2}{1-q t}+x_1 x_3 \tag{$\diamondsuit$}\\
\nonSymE_{110}^{123}(\xvec;q,t) &=\frac{q^2 t (1-t)^2 x_2 x_3}{(1-q t) \left(1-q t^2\right)}+\frac{q (1-t) x_2 x_3}{1-qt^2}+\frac{q (1-t) x_1 x_3}{1-q t}+x_1 x_2 \tag{$\spadesuit$}
\end{align}
Note that the indicated pairs of polynomials coincide when simplified.
This is a consequence of \cref{thm:partialSymmetry}.

\begin{align*}
\nonSymE_{012}^{321}(\xvec;q,t) =&\frac{q (1-t) x_2 x_3 x_1}{1-q t^2}+x_2 x_1^2,\quad
\nonSymE_{012}^{312}(\xvec;q,t) =&&\frac{q (1-t) t x_1 x_3 x_2}{1-q t^2}+x_1 x_2^2\\
\nonSymE_{012}^{213}(\xvec;q,t) =&\frac{(1-t) x_1 x_2 x_3}{1-q t^2}+x_1 x_3^2,\quad
\nonSymE_{012}^{231}(\xvec;q,t) =&&\frac{q (1-t) t x_2 x_3 x_1}{1-q t^2}+x_3 x_1^2\\
\nonSymE_{012}^{132}(\xvec;q,t) =&\frac{(1-t) x_1 x_3 x_2}{1-q t^2}+x_3 x_2^2,\quad
\nonSymE_{012}^{123}(\xvec;q,t) =&&\frac{(1-t) t x_1 x_2 x_3}{1-q t^2}+x_2 x_3^2
\end{align*}

\bibliographystyle{amsalpha}
\bibliography{bibliography}

\end{document}